\newcommand{\oN}{\mathbb{N}}
\newcommand{\oR}{\mathbb{R}}
\newcommand{\R}{\mathbb{R}}
\newtheorem{theorem}{Theorem}
\newtheorem{lemma}[theorem]{Lemma}
\newtheorem{proposition}[theorem]{Proposition}
\newtheorem{corollary}[theorem]{Corollary}
\newtheorem{example}[theorem]{Example}
\theoremstyle{remark}
\newtheorem{remark}[theorem]{Remark}
\newcommand{\supp}{\text{\rm Supp}}
\newcommand{\wh}{\widehat}
\newcommand{\olE}{{\overline E}}
\newcommand{\M}{{\mathcal M}}
\newcommand{\meas}{{\mathscr M}}
\newcommand{\bg}{\mathbf g}
\newcommand{\N}{\mathbb N}
\newcommand{\MI}{{\mathcal I}}
\newcommand{\wV}{\wh V}
\newcommand{\wG}{\wh G}
\newcommand{\wt}{\widetilde}
\newcommand{\whp}{{\wh p}}
\newcommand{\MS}{{\mathcal S}}
\newcommand{\rank}{\text{\rm rank}}
\newcommand{\cprank}{\rank_{\text{\rm cp}}}
\newcommand{\nnrank}{\rank_+}
\newcommand{\val}{\text{\rm val}}
\newcommand{\valid}{\val}
\newcommand{\valsp}{\val^{\text{\rm isp}}}
\newcommand{\valcp}{\val_{\text{\rm cp}}}
\newcommand{\xiid}{\xi}
\newcommand{\xisp}{\xi^{\text{\rm isp}}}
\newcommand{\wtxisp}{\widetilde{\xi}^{\text{\rm isp}}}
\newcommand{\xicp}{\xi^{\text{\rm cp}}}
\newcommand{\xicpid}{\xi^{\text{\rm cp}}}
\newcommand{\xicpsp}{\xi^{\text{\rm cp,isp}}}
\newcommand{\xicpwsp}{\xi^{\text{\rm cp,wisp}}}
\newcommand{\xinn}{\xi^{+}}
\newcommand{\xinnid}{\xi^{+}}
\newcommand{\xinnsp}{\xi^{+,\text{\rm isp}}}
\newcommand{\taucp}{\tau_{\text{\rm cp}}}
\newcommand{\taunn}{\tau_{+}}
\newcommand{\taucpsos}{\tau^{\text{\rm sos}}_{\text{\rm cp}}}
\newcommand{\taunnsos}{\tau^{\text{\rm sos}}_{+}}
\newcommand{\bc}{{\text{\rm bc}}}
\newcommand{\nzd}{\text{\rm nzd}}
\newcommand{\RG}{\text{\rm RG}}
\newcommand{\ecc}{\text{\rm c}}
\newcommand{\ignore}[1]{}
\providecommand{\keywords}[1]{\textbf{\textit{Keywords}} #1}
\title{Exploiting ideal-sparsity in the generalized moment problem  with application to matrix factorization ranks}
\author{Milan Korda
\thanks{LAAS-CNRS and Institute of Mathematics, Toulouse, France. Faculty of Electrical Engineering, Czech Technical University in Prague, Czech Republic.  \url{milan.korda@laas.fr}}
\and
Monique Laurent
\thanks{Centrum Wiskunde \& Informatica (CWI), Amsterdam, and Tilburg University. \url{monique.laurent@cwi.nl} }
\and
Victor Magron
\thanks{LAAS-CNRS and Institute of Mathematics, Toulouse, France. \url{vmagron@laas.fr}}
\and
Andries Steenkamp
\thanks{Centrum Wiskunde \& Informatica (CWI), Amsterdam. \url{andries.steenkamp@cwi.nl}
\newline
This work is supported by the European Union's Framework Programme for Research and Innovation Horizon
2020 under the Marie Skłodowska-Curie Actions Grant Agreement No. 813211  (POEMA), the Czech Science Foundation (GACR) under contract No. 20-11626Y, by the AI Interdisciplinary Institute ANITI funding, through the French “Investing for the Future PIA3” program under the Grant agreement n$^\circ$ ANR-19-PI3A-0004 as well as by the National Research Foundation, Prime Minister’s Office, Singapore under its Campus for Research Excellence and Technological Enterprise (CREATE) programme.
}
}
\begin{document}

\maketitle

\begin{abstract}
We explore a new type of sparsity for the generalized moment problem (GMP) that we call  {\em ideal-sparsity}. 
In this setting, one optimizes over a measure 
restricted to be supported on the variety of an ideal generated by quadratic bilinear  monomials.
We show that this restriction enables an equivalent sparse reformulation of the GMP, where the single (high dimensional) measure variable is replaced by several (lower dimensional) measure variables  supported on the maximal cliques of the graph corresponding to the quadratic bilinear constraints.
We explore the resulting hierarchies of moment-based relaxations for the original dense formulation of GMP and  this new, equivalent ideal-sparse reformulation, when applied to the problem of bounding nonnegative- and completely positive matrix factorization ranks. 
We show that the ideal-sparse hierarchies provide bounds that are at least as good (and often tighter) as those obtained from the dense hierarchy. This is in sharp contrast to the situation when exploiting correlative sparsity, as is most common in the literature, where the resulting bounds are weaker than the dense bounds. Moreover, while correlative sparsity requires the underlying graph to be chordal, no such assumption is needed for ideal-sparsity.
Numerical results show  that the ideal-sparse bounds are often tighter and much faster to compute than their dense analogs.
\end{abstract} 

\noindent
\keywords{Generalized moment problem, Polynomial optimization, Sparsity, Matrix factorization rank, Completely positive rank,  Nonnegative rank, Semidefinite programming}

\section{Introduction}\label{sec:intro}

We consider the {\em generalized moment problem} (abbreviated as GMP), of the form
\begin{align}\label{opt:dense}
\valid:=\inf\limits_{\mu\in \meas(\R^n)} \Big\{\int f_0d\mu: \int f_id\mu =a_i\ (i\in [N]),\ \supp(\mu)\subseteq K\Big\},
\end{align}
where $f_0,f_i\in \R[x]$ are multivariate polynomials in the variables $x = (x_1,...,x_n)$, $a_i\in \R$,  $K\subseteq \R^n$ (taken to be Borel measurable), and the optimization is over the set $\meas(\R^n)$ of (finite positive) Borel measures on $\R^n$. In problem (\ref{opt:dense}) we restrict to measures $\mu\in \meas(\R^n)$ whose support $\supp(\mu)$ is contained in $K$, which is equivalent to requiring $\int fd\mu=\int_K fd\mu$ for any Borel measurable function $f:\R^n\to\R$.
Throughout, we assume that $K$ is  a basic closed  semialgebraic set of the form
\begin{align}\label{eqsetK}
K=\{x\in \R^n: g_j(x)\ge 0\ (j\in [m]), \ x_ix_j=0\ (\{i,j\}\in \olE)\},
\end{align}
where $g_j\in \R[x]$ are polynomials, 
$E$  is a given set of pairs of distinct elements of $V=[n] :=\{1,\ldots,n\}$, and $\olE$ is the following set of pairs 
$$\olE=\{\{i,j\}\,:\, i\in V, j\in V, i\ne j, \{i,j\}\not\in E\}.$$ 
Hence, the set $K$  is contained in the variety of the ideal
\begin{align}\label{eqIE}
I_E := \Big\{\sum_{\{i,j\}\in \olE} u_{ij} x_ix_j: u_{ij}\in \R[x]\Big\}\subseteq\R[x]
\end{align}
generated by the monomials $x_ix_j$ for the pairs  $\{i,j\}\in \olE$.
It will be convenient to consider the graph $G=(V,E)$,  so that the conditions $x_ix_j=0$ appearing in the definition of $K$ correspond to the {\em nonedges} of~$G$. This notation may seem at first sight cumbersome. However, the motivation for it  is that the graph $G$  functions as supporting solutions for problem (\ref{opt:dense}); this will be  especially useful for applications to matrix factorization ranks like the completely positive rank (cp-rank) or the nonnegative rank of a matrix, where $G$ will correspond to the support graph of the matrix.

The generalized moment problem (with $K$ semialgebraic) has been much studied in recent years. It permits to model a wide variety of problems, including polynomial optimization (minimization of a polynomial or rational function over $K$), volume computation, control theory, option pricing in finance, and much more. See, e.g.,  \cite{Las2008,Las2009,Las2015,Las2018} and further references therein. 

\medskip
The focus of this paper is to exploit the presence of  explicit ideal constraints (of a special form) in the description of the semialgebraic set $K$ for solving problem (\ref{opt:dense}). This indeed naturally implies some sparsity structure on  problem (\ref{opt:dense}),  to which we will refer as {\em ideal-sparsity} structure. 
Our objective is to explore how one can best exploit this ideal-sparsity structure in order to define more efficient semidefinite hierarchies for problem (\ref{opt:dense}) and apply them to sparse matrix factorization ranks. A remarkable feature is that the ideal-sparse hierarchies provide bounds that are at least as good (and often better) as  the bounds provided by the original dense hierarchy. Moreover, the underlying sparsity graph is not required to be chordal. Both these features are in stark contrast to the existing sparse hierarchies based on correlative sparsity whose bounds are always dominated by the dense bounds and that require the underlying sparsity graph to be chordal in order to guarantee convergence. We refer to Section \ref{seccsp} for an in-depth discussion about correlative and ideal-sparsity.

We focus here on the application to the completely positive and the nonnegative factorization ranks, asking for a factorization by nonnegative vectors. However, as we will mention in the final discussion section,   this ideal-sparsity framework  could also be applied to more general settings. Indeed, it could be applied to other matrix factorization ranks, such as the (completely) positive semidefinite rank, where  one asks for a factorization by positive semidefinite matrices, in which case one would have to apply tools from polynomial optimization in noncommutative variables. Also, instead of an ideal generated by quadratic monomials, one could have an ideal generated by higher degree monomials. In addition, up to a change of variables, one could consider an ideal generated by more general products of linear terms, such as $(a^Tx+b)(c^Tx+d)$. This type of constraint, often known as a {\em complementary constraint}, occurs in various applications, including ReLU neural networks or optimization when considering KKT optimality conditions.
 
 \medskip
Next, we mention the overall  organization of the paper and give some general notation used throughout. After that, we will give a broad overview of the contents and main results obtained in the paper.

\subsection*{Organization of the paper}
The paper is organized as follows.  In the rest of the Introduction we outline the main results in the paper. Then, in  Section \ref{sec:prel} we recall some preliminaries about linear functionals on polynomials and moment matrices. 
In Section \ref{sec:sparseGMP} we consider the GMP (\ref{opt:dense}): we show its  sparse reformulation (\ref{opt:sparse}), we present the corresponding sparse hierarchies, and we discuss how ideal-sparsity relates to the more classic correlative sparsity.
Section~\ref{sec:app-cp} is devoted to the application to the cp-rank and Section \ref{sec:app-nn} to the application to the nonnegative rank.
We conclude with some final remarks and discussions in Section~\ref{sec:final}.

\subsection*{Notation}
We gather here some notation that is used throughout the paper.
For $n,t\in \N$ set $\N^n_t=\{\alpha\in \N^n: |\alpha|\le t\}$, where $|\alpha|=\sum_{i=1}^n\alpha_i$ denotes the degree of the monomial $x^\alpha =x_1^{\alpha_1}\cdots x_n^{\alpha_n}$.
We let $[x]_t=(x^\alpha)_{\alpha\in \oN^n_t}$ denote the vector of monomials with degree at most $t$ (listed in some given fixed order). Moreover, $\R[x]$ (resp., $\R[x]_t$) denotes the set of $n$-variate polynomials in variables $x=(x_1,\ldots,x_n)$ (with degree at most $t$).
Let $\Sigma$ denote the set of sum-of-squares polynomials, of the form $\sum_iq_i^2$ for some $q_i\in \R[x]$, and set $\Sigma_t=\Sigma\cap \R[x]_t$.

Consider a set $U\subseteq [n]$. Given a vector $y\in \R^{|U|}$, we let $(y,0_{V\setminus U})\in \R^{n}$ denote the vector obtained by padding $y$ with zeros at the entries indexed by $[n]\setminus U$.
For an $n$-variate function $f:\R^{|V|}\to\R$, we let $f_{|U}:\R^{|U|}\to\R$ denote the function in  the variables $x(U)=\{x_i: i\in U\}$, which is obtained from $f$ by setting to zero all the variables $x_i$ indexed by $i\in V\setminus U$. That is, $f_{|U}(y)=f(y,0_{V\setminus U})$ for $y\in\R^{|U|}$. 
So, if $f$ is an $n$-variate polynomial, then $f_{|U}$ is a $|U|$-variate polynomial in the variables $x(U)$.

For a symmetric matrix $M\in \mathcal S^n$, the notation $M\succeq 0$ means that $M$ is positive semidefinite, i.e., $v^TMv\ge 0$ for all $v\in\R^n$. Throughout, we let $I_n$ and $J_n$ denote the identity matrix and the all-ones matrix of size $n$, which we sometimes also denote as $I$ and $J$ when the dimension is clear from the context.
The support of a vector $x\in\R^n$ is the set $\supp(x)=\{i\in [n]: x_i\ne 0\}$.

 \subsection*{Roadmap through the paper}
\medskip In the rest of this section, we now offer a quick roadmap through the main contents of the paper. We begin with recalling how to define the dense hierarchy of bounds for the problem (\ref{opt:dense}). We then  discuss their main drawback (quick growth of the matrices involved in the semidefinite programs) and several options for addressing this difficulty that have been offered in the literature. After that, we introduce the new ideal-sparse reformulation of problem (\ref{opt:dense}) and the corresponding ideal-sparse hierarchy, which we then specialize to the applications for bounding the completely positive and nonnegative ranks. 
 
\subsection*{Classical (dense) moment relaxations} 

We begin by recalling the classical moment approach that permits to build hierarchies of semidefinite approximations for problem (\ref{opt:dense}). For details, see, e.g., the monograph by Lasserre \cite{Las2009}, or the survey \cite{dKL19}.
 For  $t\in\N\cup\{\infty\}$, the set 
\begin{equation}\label{eqQMt}
\M(\bg)_{2t}=\Big\{\sum_{j=0}^m\sigma_jg_j: \sigma_j\in \Sigma,\ \deg(\sigma_jg_j)\le 2t\Big\}\subseteq \R[x]_{2t}
\end{equation}
 is  the quadratic module generated by $\bg=(g_1,\ldots,g_m)$, and truncated at degree $2t$, setting $g_0=1$. We also set $\M(\bg)=\M(\bg)_\infty$. Here,  $\Sigma$ denotes the set of sums of squares of polynomials in $\R[x]$. 
Similarly, 
\begin{equation}\label{eqIt}
I_{E,2t}=\Big\{\sum_{\{i,j\}\in \olE} u_{ij} x_ix_j: u_{ij}\in \R[x]_{2t-2}\Big\}\subseteq\R[x]_{2t}
\end{equation}
denotes the truncation of the ideal $I_E$ at degree $2t$.
We can now define the moment relaxation of level $t$ for problem (\ref{opt:dense}):
\begin{equation}\label{eqxiidt}
\begin{array}{ll}
\xiid_t:= \inf\{L(f_0): & L\in \R[x]^*_{2t}, \\ & L(f_i)= a_i \ (i\in [N]),\\
& L\ge 0 \text{ on } \M(\bg)_{2t},\ L=0 \text{ on } I_{E,2t}\}.
\end{array}
\end{equation} 
Here, $\R[x]^*_{2t}$ denotes the set of linear functionals $L:\R[x]_{2t}\to \R$. The motivation for the above parameter is as follows. Assume $\mu\in \M(\R^n)$ is a measure that is feasible for problem (\ref{opt:dense}), and 
consider the associated linear functional $L$ that acts on $\R[x]_{2t}$ via integration: $p\in \R[x]_{2t}\mapsto L(p)=\int pd\mu$. Then, it is easy to see that $L$ is feasible for (\ref{eqxiidt}):
$L(f_i)=\int f_id\mu=a_i$, $L\ge 0$ on $ \M(\bg)_{2t}$ (since any polynomial in $\M(\bg)_{2t}$ is nonnegative on the set $K$), and 
$L=0$ on $I_{E,2t}$ (since any polynomial in $ I_{E,2t}$ vanishes on $K$). This shows that the parameter $\xiid_t$ lower bounds the optimum value $\valid$ of problem (\ref{opt:dense}). 

We refer to the above hierarchy of parameters $\xiid_t$ as the {\em dense moment hierarchy}. 
Clearly, they satisfy  $\xiid_t\le \xiid_{t+1}\le \xiid_\infty\le \valid$. 
Moreover,  under some mild assumptions, these bounds converge asymptotically to the optimum value $\valid$ of (\ref{opt:dense}).
This fundamental property follows from the general theory about GMP (see, e.g., 
\cite{Las2009}, \cite{dKL19}) and is summarized in the following theorem that will be used repeatedly throughout this work.

\begin{theorem}\label{theoconvGMP}
Assume problem (\ref{opt:dense}) is feasible and  the following Slater-type condition holds:
\begin{align}\label{eqCQ}
\text{there exist scalars } z_0,z_1,\ldots,z_N  \in\R \text{ such that } \sum_{i=0}^{N}z_i f_i(x)>0 \text{ for all } x\in K.
\end{align}
Then, problem (\ref{opt:dense}) has an optimal solution $\mu$, which can be chosen to be finite atomic. 
If, in addition, $\M(\bg)$ is Archimedean, i.e., $R-\sum_{i=1}^n x_i^2\in \M(\bg)$ for some scalar $R>0$, then  we have $\lim_{{t}\to \infty}\xiid_t= \xiid_{\infty}=\valid$.
\end{theorem}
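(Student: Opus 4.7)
The plan is to establish the two assertions separately using classical tools from the GMP literature (see \cite{Las2009,dKL19}): for the first, Tchakaloff's atomic realization theorem combined with a weak-$*$ compactness argument based on the Slater-type condition; for the second, Putinar's Positivstellensatz applied to a subsequential limit of the truncated optimal functionals, with the Archimedean assumption furnishing the moment bounds that make the limit well-defined.

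For existence and atomicity, I would first invoke Tchakaloff's theorem to reduce to atomic measures supported on $K$ with at most $N+2$ atoms: since the problem depends only on the integrals of the $N+1$ polynomials $f_0,f_1,\ldots,f_N$, any feasible measure can be replaced by such a finite atomic one with the same integrals. Taking a minimizing sequence $\{\mu_k\}$ of atomic measures, I would use \eqref{eqCQ} to bound their total masses: integrating $\sum_{i=0}^N z_i f_i > 0$ against $\mu_k$ gives $\int (\sum_i z_i f_i)\, d\mu_k = z_0 \int f_0\, d\mu_k + \sum_{i\ge 1} z_i a_i$, and strict positivity of the integrand on $K$ together with the bound on the minimizing values forces $\mu_k(K)$ to be uniformly bounded. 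Weak-$*$ compactness then produces a limit measure $\mu^\star$ that is feasible for \eqref{opt:dense}, attains $\valid$, and is atomic.

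For the convergence $\lim_t \xiid_t = \valid$, I would select an optimal (or $\varepsilon$-optimal) functional $L_t$ of \eqref{eqxiidt} at each level $t$. The Archimedean assumption $R-\sum_i x_i^2\in \M(\bg)$ yields the standard uniform moment bounds $L_t(x_i^{2s})\le L_t(1)\,R^s$ for $s\le t$, and $L_t(1)$ is itself controlled via \eqref{eqCQ}. After normalization, a Cantor diagonal extraction produces a pointwise limit $L_\infty:\R[x]\to\R$ with $L_\infty(f_i)=a_i$, nonnegative on the full quadratic module $\M(\bg)$, and vanishing on the full ideal $I_E$. Putinar's Positivstellensatz then provides a Borel measure $\mu$ supported on $\{g_j\ge 0\}_{j\in[m]}$ representing $L_\infty$; the identity $\int (x_ix_j)^2\, d\mu = L_\infty((x_ix_j)^2) = 0$ for each nonedge $\{i,j\}\in\olE$ forces $x_ix_j=0$ $\mu$-a.e., whence $\supp(\mu)\subseteq K$. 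Thus $\mu$ is feasible for \eqref{opt:dense} and $\valid\le\int f_0\, d\mu = L_\infty(f_0) = \lim_t \xiid_t$, which combined with the a priori $\xiid_t\le \valid$ gives equality.

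The main obstacle I anticipate is the passage from truncated data to an infinite object: the limit functional $L_\infty$ must be well-defined on all of $\R[x]$, and Putinar's theorem must then produce a measure whose support respects both the semialgebraic constraints $g_j\ge 0$ and the ideal constraints $x_ix_j=0$. The Archimedean hypothesis is precisely what controls the truncated moments uniformly in $t$ so that the diagonal limit exists, and the square-of-a-monomial trick above cleanly transfers the algebraic ideal condition to $\mu$-a.e.\ support containment, circumventing the complication that individual $x_ix_j$ need not be sign-definite on $\R^n$.
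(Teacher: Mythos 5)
The paper offers no proof of Theorem \ref{theoconvGMP}: it is quoted as a known consequence of the general GMP theory with pointers to \cite{Las2009,dKL19}, so there is no in-paper argument to compare against. Your proposal reconstructs the standard proof from those sources, and its architecture is the right one: a Richter/Tchakaloff reduction to atomic measures plus a compactness argument for attainment, and, for the hierarchy, uniform moment bounds from the Archimedean condition, a diagonal limit $L_\infty$, Putinar's representation theorem, and the observation that $(x_ix_j)^2\in I_E$ so that $L_\infty((x_ix_j)^2)=0$ forces $x_ix_j=0$ on $\supp(\mu)$. That last step is the only place where the ideal constraints enter beyond the classical argument, and you handle it correctly.

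Three steps need more care. First, the uniform mass bound requires $\inf_K\sum_i z_if_i>0$, not merely pointwise positivity: on a noncompact $K$ a polynomial can be positive with infimum $0$, and then (\ref{eqCQ}) does not bound $\mu_k(K)$. Compactness of $K$ is a standing assumption in the cited sources (and holds under the Archimedean hypothesis of the second assertion, as well as in all applications in this paper, where one takes $z_0=1$, $z_i=0$), but it is not literally among the hypotheses of the first assertion, so you should state that you are using it; relatedly, when $z_0<0$ the identity $\int\sum_iz_if_i\,d\mu_k=z_0\int f_0\,d\mu_k+\sum_{i\ge1}z_ia_i$ bounds the objective from above rather than below, so the mass bound and attainment really need $z_0\ge 0$ (or a separate argument). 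Second, a weak-$*$ limit of atomic measures need not be atomic; either pass to convergent subsequences of the boundedly many atoms and weights inside the compact set $K$, or apply Tchakaloff/Richter once more to the limit measure. Third, in the convergence part, the truncated functional $L_t$ only ``sees'' positivity on $K$ through certificates: to deduce $\epsilon\,L_t(1)\le L_t(\sum_iz_if_i)$ one needs $\sum_iz_if_i-\epsilon\in\M(\bg)_{2t}+I_{E,2t}$ for $t$ large, which is again supplied by Putinar's Positivstellensatz under the Archimedean assumption. With these repairs your argument is complete and coincides with the one the paper implicitly invokes.
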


As it will be recalled in Section \ref{sec:prelmom}, program (\ref{eqxiidt})  can be reformulated as a semidefinite program and thus the bound $\xiid_t$ can be computed using semidefinite optimization algorithms. However, a common drawback  of the dense hierarchy (\ref{eqxiidt})  is that it involves matrices whose size grows very quickly with the level $t$ and with the degree and number of variables of the polynomials $f_0$, $f_1,\ldots,f_N$, $g_1,\dots,g_m$. 
Hence, even though these relaxations are convex, they might be challenging to solve already for GMP instances of modest size. 

\subsection*{Existing schemes to improve scalability of the dense moment relaxations}

Several schemes have been developed to overcome the scalability issue of the dense hierarchy (\ref{eqxiidt}) just mentioned above.
They aim to reduce the size of the involved matrices by exploiting the specific structure of the input polynomials without compromising the convergence guarantees of the structure-induced moment relaxations. 
One workaround consists of exploiting the symmetries \cite{riener2013exploiting}, but this requires that each input polynomial is invariant under the action of a subgroup of the general linear group.  

Another approach is to exploit different kinds of sparsity structures.  
The first kind is called {\em correlative sparsity}, which occurs when there are few correlations between the variables of the input polynomials \cite{waki2006sums,lasserre2006convergent}. 
Correlative sparsity has been extended to derive moment relaxations of polynomial problems in complex variables \cite{josz2018lasserre},  noncommutative variables \cite{klep2021sparse} and polynomial matrix inequalities \cite{zheng2021sum}. 
The second kind is called {\em term sparsity}, which occurs when they are few (by comparison with all possible) monomial terms involved in the input polynomials, and for which correlative sparsity is not exploitable. 
For unconstrained polynomial optimization, one well-known solution is to eliminate the monomial terms which never appear among the support of sums of squares decompositions \cite{reznick1978extremal}.
Alternatively, one can decompose the input polynomial as a sum of nonnegative circuits, by solving a geometric programming relaxation \cite{iliman2016amoebas} or a second-order cone programming relaxation  \cite{averkov2019optimal,wang2020second}, or as a  sum of arithmetic-geometric-mean-exponentials  \cite{chandrasekaran2016relative} with relative entropy programming relaxations.
Term sparsity has recently been the focus of active research with extensions to constrained polynomial optimization  \cite{wang2021chordal,wang2021tssos}. 
Note that both kinds of sparsity can be combined \cite{cstssos}. For a general exposition about sparse polynomial optimization, we refer to the recent surveys \cite{sparsebook,zheng2021review}.

We will return to the correlative sparsity approach for GMP later in Section \ref{seccsp} and discuss  how it relates to the new ideal-sparsity structure considered in the paper.
By contrast with classical polynomial optimization problems, it is not
completely clear which initial set of monomials should be chosen to
initialize the term sparsity hierarchy when facing a given GMP
instance. Therefore, we do not explore the combination of term and
ideal-sparsity, for such an investigation would warrant a separate
publication.

\subsection*{New ideal-sparse moment relaxations}\label{sec:intro-sparse}

As we now see, one can  exploit the fact that the set $K$ in (\ref{eqsetK}) is contained in the variety of the ideal $I_E$ from (\ref{eqIE}). The basic idea is that, instead of optimizing over a {\em single} measure $\mu$ supported on  $K\subseteq \R^n$, one may  optimize over {\em several} measures that are supported on smaller dimensional spaces.

A set $W \subseteq V$ is a clique of the graph $G=(V,E)$ if $\{u,v\}\in E$ for any two distinct vertices  $u,v\in W$.
A clique is maximal (w.r.t inclusion) if it is not strictly contained in any other clique of $G$.
Let $V_1,\ldots,V_p$ denote the maximal cliques of the graph $G=(V,E)$ and,
for $k\in [p]$, define  the following subset of $K$:
\begin{equation}\label{eqwhKk}
\wh {K_k} :=\{x\in K: \supp(x)\subseteq V_k\}\subseteq K\subseteq \R^n.
\end{equation}
Recall $\supp(x)=\{i\in [n]: x_i\ne 0\}$ denotes the {support} of $x\in \R^n$.
If $x\in K$, then its support $\supp(x)$ is a clique of the graph $G$ and thus it is contained in a maximal clique $V_k$,  so that  $x\in \wh{K_k}$ for some $k\in [p]$.
Therefore, the sets $\wh{K_1},\ldots,\wh{K_p}$ cover the set $K$:
\begin{equation}\label{eqKkcover}
K=\wh{K_1}\cup \ldots \cup \wh{K_p}. 
\end{equation}

\noindent
  Next, define the projection $K_k\subseteq \R^{|V_k|}$ of $\wh{K_k}$ onto the subspace indexed by $V_k$:
 \begin{align}\label{eqKk}
K_k:=\{y\in \R^{|V_k|}: (y,0_{V\setminus V_k})\in \wh{K_k}\} \subseteq \R^{|V_k|}.
\end{align}
Recall that $(y,0_{V\setminus V_k})$  denotes the vector of $\R^n$ obtained from $y\in \R^{|V_k|}$ by padding it with zeros at all entries indexed by $V\setminus V_k$.
Moreover, given a function $f: \oR^{|V|}\to \oR$, the function $f_{|V_k}(y): \oR^{|V_k|}\to \oR$ is defined by 
$f_{|V_k}(y)=f(y,0_{V\setminus V_k})$ for $y\in\R^{|V_k|}$.
We may now define the following sparse analog of problem~(\ref{opt:dense}):
\begin{align}\label{opt:sparse}
\valsp:= \inf_{\mu_k\in \meas(\R^{|V_k|}), k\in [p]} \Big\{\sum_{k=1}^p \int  {f_0}_{|V_k}d\mu_k: \sum_{k=1}^p \int {f_i}_{|V_k} d\mu_k=a_i \ (i\in [N]), \ \supp(\mu_k)\subseteq K_k\ (k\in [p])\Big\}.
\end{align}
Hence, while problem (\ref{opt:dense}) has a single measure variable $\mu$ on the space $\R^{|V|}$,  problem (\ref{opt:sparse}) involves $p$ measure variables, where $\mu_k$ is on the smaller dimensional space $\R^{|V_k|}$. As we will show in Proposition \ref{propequiv} below, both formulations (\ref{opt:dense}) and (\ref{opt:sparse}) are in fact equivalent, i.e, we have equality $\valid=\valsp$.
Here, we use the superscript `isp' as a reminder that the formulation exploits ideal-sparsity; we will follow this same notation below for the corresponding moment hierarchy and also later for the parameters attached to matrix factorization ranks.

Based on its reformulation via (\ref{opt:sparse}), we can now define another hierarchy of moment approximations for problem (\ref{opt:dense}), to which we refer as the {\em  ideal-sparse moment hierarchy}:
\begin{equation}\label{eqxispt}
\begin{array}{ll}
\xisp_t:=\inf\Big \{\sum_{k=1}^p L_k({f_0}_{|V_k}): & L_k \in \R[x(V_k)]_{2t}^* \ (k\in [p]),\\
&  \sum_{k=1}^p L_k({f_i}_{|V_k})=a_i\ (i\in [N]),\\
& L_k\ge 0\text{ on } \M(\bg_{|V_k})_{2t} \ (k\in [p])\Big\}.
\end{array}
\end{equation}
This hierarchy provides bounds for $\valid$ that are at least at good as the bounds (\ref{eqxiidt}).
Namely, $$\xiid_t\le \xisp_t \le \valid$$ holds for any $t\ge 1$ (see Theorem \ref{lem:dens<spar} below).

Hence, the ideal-sparse bounds $\xisp_t$ present a double advantage compared to the dense bounds $\xiid_t$. First, they are at least as good and sometimes strictly  better, as we will see later in concrete examples. For the application to the completely positive and nonnegative ranks, we will see classes of matrices showing a large separation between the dense bound and the ideal-sparse bound of level $t=1$; see Examples \ref{excp} and \ref{ex:xi_1_sep}.
Second, their computation is potentially faster since  the sets $V_k$ can be much smaller than the full set $V$. 
We will also see in later examples that the computation of the ideal-sparse bounds can be much faster indeed.  On the other hand, the number of cliques in the graph $G$ could be large, so there is a trade-off. We refer to discussions later in the paper around specific applications.

Interestingly, no structural chordality property needs to be  assumed on the cliques $V_1,\ldots,V_p$ of the graph $G$.
We will comment  in Section \ref{seccsp} about the link between the ideal-sparsity approach presented here and the more classical  correlative sparsity approach  that can be followed when considering a chordal extension $\wh G$ of the graph $G$.

The idea of optimizing over multiple measures has appeared already in several contexts, similarly to what can be routinely done in most computational methods, e.g., finite elements.
In the context of analyzing dynamical systems involving polynomial data, a similar trick has been used to perform optimal control of piecewise-affine systems in \cite{abdalmoaty2013measures}, then later on to characterize invariant measures for piecewise-polynomial systems (see \cite[\S~3.5]{invsdp}).
In the context of set estimation, one can also rely on a multi-measure approach to approximate the moments of Lebesgue measures supported on unions of basic semialgebraic sets  \cite{lasserre2019semidefinite}. 
The common idea consists in using the piecewise structure of the dynamics and/or 
the state-space partition to decompose the measure of interest into a sum of local  measures supported on each partition cell.
The advantage in our current setting is that these measures are supported on smaller dimensional spaces, which leads to potentially strong computational benefit when considering the associated semidefinite programming relaxations.

\medskip
We next present   instances of GMP to which the above ideal-sparsity framework naturally applies, namely to derive bounds on matrix factorization ranks such as the completely positive rank and the nonnegative rank.

\subsection*{Bounds on the completely positive rank via GMP}

 Let $A\in\mathcal S^n$ be a symmetric matrix with nonnegative entries.
Assume $A$ is a completely positive matrix (abbreviated as  {\em cp-matrix}), i.e., 
$A$ can be written as 
\begin{align*}\label{eqA}
A=\sum_{\ell=1}^ra_\ell a_\ell^T\ \text{ for some nonnegative vectors } a_1,\ldots, a_r\in \R^n_+.
\end{align*}
 Then, the smallest integer $r\in\N$ for which such a decomposition exists is  the {\em cp-rank} of $A$, denoted $\cprank(A)$.  Checking whether a given matrix $A$ is completely positive is a computational hard problem (see \cite{DickinsonGijben}). The moment approach has been applied to the question of testing whether $A$ is a a cp-matrix and finding a cp-factorization, in particular, by Nie \cite{Nie14} who formulates it as testing the existence of a representing measure (over the standard simplex) for the sequence of entries of $A$. 

Hierarchies of moment-based relaxations have also been employed to obtain sequences of bounds for the rank of tensors \cite{tang2015guaranteed},
as well as for the symmetric nuclear norm of tensors \cite{nie2017symmetric}.  
Here, we focus on the question of bounding the cp-rank.
No efficient algorithms are known for  finding the cp-rank.  This motivates the search for efficient methods giving lower bounds on the cp-rank, as, e.g.,  in \cite{FP16,GdLL2019a,GLS2022}. 
 The following parameter was introduced in \cite{FP16}, as  a natural ``convexification" of the cp-rank:
 \begin{equation}\label{eqtaucp}
 \taucp (A)=\inf\Big\{\lambda: {1\over \lambda }A \in \text{conv}\{xx^T: x\in \R^n_+,\ A-xx^T\succeq 0, A\ge xx^T\}\Big\},
\end{equation}
providing a lower bound for it: $\taucp(A)\le \cprank(A)$.
As observed below, the parameter $\taucp(A)$ can be reformulated as an instance of problem (\ref{opt:dense}), with an ideal-sparsity structure inherited from the matrix $A$.
 
To avoid trivialities we assume $A_{ii}>0$ for all $i\in [n]$. (Indeed, if $A$ is a  cp-matrix with $A_{ii}=0$, then its $i$-th row/column is identically zero and thus it can be removed without changing the cp-rank.)
Note that the constraints $A\ge xx^T$ and $x\ge 0$ are equivalent\footnote{The reason for using the constraint $\sqrt{A_{ii}}x_i-x_i^2\ge 0$ instead of $A_{ii}-x_i^2\ge 0$ is because it leads to a larger quadratic module and thus a possibly better bound (see \cite{GdLL2019a}).}  to $\sqrt{A_{ii}}x_i-x_i^2\ge 0$ ($i\in [n]$) and $ A_{ij}-x_ix_j\ge 0$ ($1\le i<j\le n$). Moreover, they imply $x_ix_j=0$ whenever $A_{ij}=0$. Let us define the graph $G_A=(V,E_A)$ as  the {\em support graph} of $A$, with  
\begin{equation}\label{eqsetEA}
E_A=\{\{i,j\}: A_{ij}\ne 0,\, i,j\in V,\, i\ne j\},\ \olE_A=\{\{i,j\}: A_{ij}=0,\, i,j\in V,\, i\ne j\},
\end{equation}
 and define the semialgebraic set 
\begin{equation}\label{eqsetKA}
\begin{array}{lll}
K_A=\{x\in \R^n: &\  \sqrt{A_{ii}}x_i-x_i^2\ge 0 \ (i\in [n]), & \ A_{ij}-x_ix_j\ge 0 \ (\{i,j\}\in E_A),\\
 & \ x_ix_j=0 \ (\{i,j\}\in \olE_A),
&\  A-xx^T\succeq 0
\}.
\end{array}
\end{equation}

As we now observe, the parameter $\taucp(A)$ can be reformulated as an instance of GMP.
 
 \begin{lemma} \label{lemtaucpGMP}
 The parameter $\taucp(A)$ is equal to the optimal value of the generalized moment problem:
$$\valcp:=\inf_{\mu\in \meas(\R^n)}\Big \{\int 1d\mu: \int x_ix_j d\mu=A_{ij} \ (i,j\in V), \ \supp(\mu)\subseteq K_A\Big\}.$$
 \end{lemma}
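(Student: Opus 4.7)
The plan is to establish $\taucp(A)=\valcp$ by recognizing that $\taucp(A)$ is nothing but the same GMP as $\valcp$ restricted to finitely atomic measures, and then invoking Theorem \ref{theoconvGMP} to produce a finitely atomic optimizer of $\valcp$.

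First I would unpack the convex hull in (\ref{eqtaucp}): a point $\tfrac{1}{\lambda}A$ lies in it iff $\tfrac{1}{\lambda}A=\sum_\ell c_\ell x_\ell x_\ell^T$ for finitely many $x_\ell\in\R^n_+$ satisfying $A-x_\ell x_\ell^T\succeq 0$ and $A\ge x_\ell x_\ell^T$ entrywise, with convex weights $c_\ell\ge 0$ summing to one. Using the equivalence recorded just before the lemma (the constraints $x\ge 0$ together with $A\ge xx^T$ are equivalent to the system $\sqrt{A_{ii}}x_i-x_i^2\ge 0$, $A_{ij}-x_ix_j\ge 0$, and $x_ix_j=0$ when $A_{ij}=0$), these $x_\ell$ are exactly the points of the set $K_A$ defined in (\ref{eqsetKA}). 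Setting $\mu=\sum_\ell(\lambda c_\ell)\delta_{x_\ell}$ yields an atomic measure with $\int 1\,d\mu=\lambda$, $\int x_ix_j\,d\mu=A_{ij}$, and $\supp(\mu)\subseteq K_A$; conversely, any finitely atomic feasible $\mu$ for $\valcp$ reproduces a representation in the convex hull with $\lambda=\int 1\,d\mu$. Hence $\taucp(A)=\inf\{\int 1\,d\mu:\mu\text{ finitely atomic and feasible for }\valcp\}$, which immediately yields $\valcp\le\taucp(A)$.

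For the reverse inequality I would apply Theorem \ref{theoconvGMP} to the GMP defining $\valcp$. The Slater condition (\ref{eqCQ}) holds trivially with $z_0=1$ and all remaining $z_i=0$, since $f_0\equiv 1>0$ on $K_A$. Feasibility follows from any cp-decomposition $A=\sum_\ell a_\ell a_\ell^T$ with $a_\ell\in\R^n_+$: each $a_\ell$ lies in $K_A$, because $A-a_\ell a_\ell^T=\sum_{\ell'\ne\ell}a_{\ell'}a_{\ell'}^T\succeq 0$, $A_{ij}\ge(a_\ell)_i(a_\ell)_j$ entrywise, and whenever $A_{ij}=0$ all nonnegative summands in $\sum_{\ell'}(a_{\ell'})_i(a_{\ell'})_j$ must vanish, forcing $(a_\ell)_i(a_\ell)_j=0$. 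Therefore $\mu=\sum_\ell\delta_{a_\ell}$ is feasible. Theorem \ref{theoconvGMP} then produces a finitely atomic optimizer of $\valcp$, which by the first paragraph corresponds to a point of the convex hull in (\ref{eqtaucp}) certifying $\taucp(A)\le\valcp$.

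The main bookkeeping step, and the only place where care is required, is the identification of the convex-hull constraints in (\ref{eqtaucp}) with the defining constraints of $K_A$ in (\ref{eqsetKA}); once this matching is established, the equivalence reduces to a direct application of the general GMP result recalled in Theorem \ref{theoconvGMP}.
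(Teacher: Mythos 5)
Your proof is correct and follows essentially the same route as the paper: the correspondence between finitely atomic feasible measures and convex-hull representations with atoms in $K_A$, combined with the Slater condition and Theorem \ref{theoconvGMP} to extract a finitely atomic optimizer. The only point the paper states more explicitly is the degenerate case where $A$ is not completely positive (both parameters are then infeasible and equal to $+\infty$); your argument covers this implicitly, since if the GMP were feasible the extracted atomic optimizer would yield a cp-decomposition, and if it is infeasible the inequality $\taucp(A)\le\valcp$ is vacuous.
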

 \begin{proof}
 The (easy) key observation is that any feasible solution to $\taucp(A)$, i.e., any decomposition
of the form $A=\lambda \sum_{\ell=1}^s \lambda_\ell a_{\ell}a_{\ell}^T$ with $\lambda_\ell\ge 0,$ $\sum_{\ell  =1}^s\lambda_\ell=1$ and $a_\ell\in K_A$, corresponds to a measure $\mu:=\lambda\sum_{\ell=1}^s\lambda_\ell \delta_{a_{\ell}}$ that is feasible for $\taucp(A)$ and finite atomic (and vice-versa).
Observe also that the Slater-type condition (\ref{eqCQ}) holds (since 
$f_0=1>0$ on $K_A$). The result now follows using (the first part of) 
 Theorem~\ref{theoconvGMP}: if $A$ is completely positive, then $\valcp$ is feasible and thus has a finite atomic optimal solution, which implies $\taucp(A)=\valcp$; otherwise, both parameters $\taucp(A)$ and $\valcp$ are infeasible and thus equal to $\infty$.
 \end{proof}
 
Based on the formulation of the parameter $\taucp(A)$ in Lemma \ref{lemtaucpGMP} as a GMP instance,  we can define  the corresponding bounds $\xicpid_t(A)$,
 obtained as special instance of the bounds  (\ref{eqxiidt}) (see relations (\ref{eqcpA})-(\ref{eqcpmat}) below).
Then, the convergence of the bounds $\xicpid_t(A)$ to $\taucp(A)$  follows   as a direct application of Theorem \ref{theoconvGMP}.

\medskip
As in the general case of GMP, one may exploit the presence of the ideal constraints $x_ix_j=0$ (for $\{i,j\}\in \olE_A$) in the definition of $K_A$ and define a hierarchy of ideal-sparse bounds 
$\xicpsp_t(A)$. These bounds satisfy 
$$
 \xicpid_t(A)\le \xicpsp_t(A)\le \taucp(A)\ \text{ for any } t\ge 1,$$ also with asymptotic convergence 
 to $\taucp(A)$.
We refer to Section \ref{sec:app-cp} for details about these parameters and links to earlier bounds in the literature.

\subsection*{Bounds on the nonnegative rank via GMP}

The above approach  for the cp-rank  naturally extends to the asymmetric setting of the nonnegative rank.
For a nonnegative matrix $M\in \R^{m\times n}$, its {\em nonnegative rank}, denoted $\nnrank(M)$, is defined as the smallest integer $r$ for which there exist nonnegative vectors $a_\ell \in\R^m_+$ and $b_\ell\in \R^n_+$ such that 
\begin{equation}\label{eqnnM0}
M=\sum_{\ell=1}^r a_\ell b_\ell^T.
\end{equation}
In other words, $\nnrank(M)$ can be seen as  the smallest cp-rank of a cp-matrix $A\in \mathcal S^{m+n}$ of the form
$$A= \left(\begin{matrix} X & M \cr M^T & Y\end{matrix}\right) \text{ for some nonnegative symmetric matrices } X\in \mathcal S^m, Y\in \mathcal S^n.$$
Computing the nonnegative rank is an NP-hard problem \cite{Vav09}. 
In analogy to the parameter $\taucp$ in (\ref{eqtaucp}),  the following ``convexification" of the nonnegative rank  was introduced  in \cite{FP16}:
\begin{equation}\label{eqtaunn}
\tau_+(M)=\inf\Big\{\lambda: {1\over \lambda} M \in \text{conv}\{ xy^T: x\in \R^m_+,\ y\in \R^n_+,\ M \ge xy^T\}\Big\}.
\end{equation}
Note that, compared to the parameter $\taucp(A)$ in (\ref{eqtaucp}), where we had an additional constraint $A-xx^T\succeq 0$,  we now cannot impose such a constraint. 

One can define  analogs of the bounds $\xicpid_t$ and $\xicpsp_t$ for the nonnegative rank, which now involve a linear functional acting on polynomials in $m+n$ variables. For convenience, we set $V=[m+n]=U\cup W$, where $U=[m]=\{1,\ldots,m\}$ (corresponding to the row indices of $M$) and $W=\{m+1,\ldots, m+n\}$ (corresponding to the column indices of $M$, up to a shift by $m$).
We also set 
\begin{equation}\label{eqsetEM}
E^M=\{\{i,j\}\in U\times W:  M_{i,j-m}\ne 0\},\ \olE^M=(U\times W)\setminus E^M=\{\{i,j\}\in U\times W: M_{i,j-m}=  0\},
\end{equation}
so that $E^M$ corresponds to the (bipartite) support graph of the matrix $M$. Note that, in comparison to (\ref{eqsetEA}), we now only consider {\em bipartite} pairs $\{i,j\}$ (with $i\in U$ and $j\in W$). To emphasize the difference between the two situations we now put $M$ as a superscript, while we placed $A$ as subscript in the notation $E_A$. 

Let $M_{\max}=\max_{i,j}M_{ij}$ denote the largest entry of $M$. 
As observed in \cite{GdLL2019a}, one may assume without loss of generality that the vectors in (\ref{eqnnM0}) satisfy 
$\|a_\ell\|_\infty, \|b_\ell\|_\infty \le \sqrt{M_{\max}}$ (after rescaling). 
This motivates defining the following semialgebraic set 
\begin{equation}\label{eqsetKM}
\begin{array}{lll}
K^M=\{x\in \R^{m+n}: & \sqrt{M_{\max}}x_i-x_i^2\ge 0\ (i\in [m+n]), & M_{i,j-m}-x_ix_j \ge 0 \ ( \{i,j\}\in E^M),\\
& x_ix_j=0 \ ( \{i,j\}\in \olE_M\} . &
\end{array}
\end{equation}
The analog of Lemma \ref{lemtaucpGMP} holds, which provides a GMP reformulation for $\taunn(M)$.
\begin{lemma} \label{lemtaunnGMP}
The parameter $\taunn(M)$ is equal to the optimal value of the generalized moment problem:
$$\inf_{\mu\in \meas(\R^{m+n})}\Big \{\int 1d\mu: \int x_ix_j d\mu=M_{i,j-m} \ (i\in U, j\in W), \ \supp(\mu)\subseteq K^M\Big\}.$$
 \end{lemma}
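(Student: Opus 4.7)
The plan is to mirror the proof of Lemma \ref{lemtaucpGMP}, exhibiting a correspondence between feasible solutions of $\taunn(M)$ and finite atomic measures feasible for the GMP on the right-hand side, and then invoking Theorem \ref{theoconvGMP} to argue that restricting to such atomic measures is without loss of generality.

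For the ``easy'' direction, I would start from a feasible decomposition for $\taunn(M)$, i.e.,
\[
M = \lambda \sum_{\ell=1}^s \lambda_\ell\, a_\ell b_\ell^T, \qquad \lambda_\ell \ge 0,\ \textstyle\sum_\ell \lambda_\ell = 1,\ a_\ell \in \R^m_+,\ b_\ell \in \R^n_+,\ a_\ell b_\ell^T \le M.
\]
Applying the rescaling argument of \cite{GdLL2019a} recalled just above the lemma, I may assume that $\|a_\ell\|_\infty, \|b_\ell\|_\infty \le \sqrt{M_{\max}}$ for every $\ell$. Concatenating $c_\ell := (a_\ell, b_\ell) \in \R_+^{m+n}$, I would then verify that $c_\ell \in K^M$: the box constraints $\sqrt{M_{\max}}\,(c_\ell)_i - (c_\ell)_i^2 \ge 0$ follow from the rescaling; the inequalities $M_{i,j-m} - (c_\ell)_i (c_\ell)_j \ge 0$ for $\{i,j\} \in E^M$ follow directly from $a_\ell b_\ell^T \le M$; and the equalities $(c_\ell)_i (c_\ell)_j = 0$ for $\{i,j\} \in \olE^M$ follow from $M_{i,j-m} = 0$ together with nonnegativity. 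The measure $\mu := \lambda \sum_{\ell=1}^s \lambda_\ell\, \delta_{c_\ell}$ is then finite atomic, supported on $K^M$, with $\int 1\, d\mu = \lambda$ and $\int x_i x_j\, d\mu = M_{i,j-m}$ for $i \in U$, $j \in W$, hence feasible for the GMP with the same objective value $\lambda$. The reverse direction is symmetric: given any finite atomic measure $\mu = \sum_\ell w_\ell \delta_{c_\ell}$ that is feasible for the GMP, splitting each atom as $c_\ell = (a_\ell, b_\ell)$ with $a_\ell = (c_\ell)|_U$ and $b_\ell = (c_\ell)|_W$ reconstructs a feasible solution of $\taunn(M)$ of value $\sum_\ell w_\ell$, where the constraints defining $K^M$ guarantee $a_\ell b_\ell^T \le M$ entry-wise.

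To close the argument I would verify the Slater-type condition \eqref{eqCQ} for this GMP instance, which is immediate since $f_0 = 1 > 0$ on all of $K^M$ (take $z_0 = 1$ and $z_i = 0$ otherwise). Theorem \ref{theoconvGMP} then guarantees that, whenever it is feasible, the GMP attains its infimum at a finite atomic optimal measure; combined with the bijection above this gives $\taunn(M)$ equal to the GMP value. Infeasibility of one problem translates into infeasibility of the other (in which case both equal $+\infty$), though for a nonnegative $M$ this case does not actually arise. I expect the main subtlety to lie in the rescaling step, i.e., ensuring that one can simultaneously fit both $a_\ell$ and $b_\ell$ into $[0,\sqrt{M_{\max}}]$ while preserving $a_\ell b_\ell^T \le M$; this is precisely where the specific choice of the box scale $\sqrt{M_{\max}}$ in the definition of $K^M$ becomes essential.
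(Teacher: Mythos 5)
Your proof is correct and follows exactly the route the paper intends: the paper does not spell out a proof of Lemma~\ref{lemtaunnGMP}, simply noting it is ``the analog of Lemma~\ref{lemtaucpGMP},'' and your argument is precisely that adaptation. You are right to flag the rescaling step as the one place where the adaptation is not completely automatic: in the completely positive case the constraint $A\ge xx^T$ already forces $x_i\le\sqrt{A_{ii}}$, so a feasible atom in the definition of $\taucp(A)$ lies in $K_A$ with no further work, whereas in the asymmetric case $M\ge xy^T$ does not bound $\|x\|_\infty$ and $\|y\|_\infty$ individually, and one must use the invariance of $xy^T$ under $(x,y)\mapsto(sx,s^{-1}y)$ to land inside $K^M$. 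Your verification of this, together with the Slater condition and the appeal to Theorem~\ref{theoconvGMP} to reduce to finite atomic measures, is exactly what the paper's proof of Lemma~\ref{lemtaucpGMP} does in the symmetric setting.
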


Based on this formulation of the parameter $\taunn(M)$,  we may consider the corresponding bounds $\xinnid_t(A)$, as special instance of the bounds in (\ref{eqxiidt}).
Their asymptotic convergence  to $\taunn(A)$ follows   as a direct application of Theorem \ref{theoconvGMP}.
One may also exploit the presence of the ideal constraints $x_ix_j=0$ (for $\{i,j\}\in \olE^M$) in the definition of $K^M$ and define a hierarchy of sparse bounds 
$\xinnsp_t(M)$. These parameters satisfy 
$$
\xinnid_t(M)\le \xinnsp_t(M)\le \taunn(M)\ \text{ for any } t\ge 1,
$$ 
with asymptotic convergence of all parameters to $\tau_+(M)$.
We refer to Section \ref{sec:app-nn} for details about these parameters.

\section{Preliminaries about sums of squares and moments}\label{sec:prel}

In this section, we recall some preliminaries about sums of squares and linear functionals on polynomials that we will use throughout.
These results are well-known in the polynomial optimization community, we refer, e.g., to the following sources \cite{dKL19,Las2001,Las2008,Las2009,Las2015,Las2018,Laurent2009} and further refereces therein for background and broad overviews.

\subsection{Nonnegative linear functionals and  moment matrices}\label{sec:prelmom}

The program (\ref{eqxiidt}) defining the parameter $\xiid_t$  involves a  linear functional $L\in \R[x]^*_{2t}$, which is assumed to be nonnegative on the truncated quadratic module $\M(\bg)_{2t}$ (in (\ref{eqQMt})) and to vanish on the truncated ideal $I_{E,2t}$ (in (\ref{eqIt})).
We now recall how these conditions can be expressed more concretely in terms of positive semidefiniteness conditions on associated (moment) matrices and thus used to reformulate the program (\ref{eqxiidt}) as a semidefinite program.

For this, given $L\in \R[x]_{2t}^*$, define the matrix 
$$M_t(L):=(L(x^\alpha x^\beta))_{\alpha,\beta\in \N^n_t}= L([x]_t [x]_t^T),$$ often called a {\em (pseudo)moment matrix} in the literature. So, in  the notation $L([x]_t [x]_t^T)$, it is understood that $L$ is acting entry-wise on the entries of the polynomial matrix  $[x]_t [x]_t^T=(x^{\alpha+\beta})_{\alpha,\beta\in\N^n_t}$. Then, it is well-known (and easy to see) that 
$L(\sigma)\ge 0$ for all $\sigma\in\Sigma\cap\R[x]_{2t}$ if and only if the matrix $M_t(L)$ is positive semidefinite.
Consider now a polynomial $g$ with degree $k=\deg(g)$. Then $L(\sigma g)\ge 0$ for all $\sigma\in\Sigma$ with $\deg(\sigma g)\le 2t$ if and only if the matrix $M_{t-\lceil k/2\rceil}(gL):=L(g[x]_{t-\lceil k/2\rceil} [x]_{t-\lceil k/2\rceil}^T)$ (often called a {\em localizing moment matrix}) is positive semidefinite. Hence, the condition $L\ge 0$ on $\M(\bg)_{2t}$ can be equivalently reformulated via the positive semidefiniteness constraints
$$L([x]_t [x]_t^T)\succeq 0,\quad L(g_j [x]_{t-\lceil \deg(g_j)/2\rceil} [x]_{t-\lceil \deg(g_j)/2\rceil}^T) \succeq 0 \ \text{ for } j\in [m].
$$
In the same way, the ideal condition $L=0$ on $I_{E,2t}$ is equivalent to the linear constraints
$$L(x_ix_jx^\alpha)=0\ \text{ for all } \{i,j\}\in \olE \text{ and } \alpha\in \N^n_{2t-2}.
$$
Hence, the parameter $\xiid_t$ is expressed as the optimum value of a semidefinite program. 
Recall that there exist efficient algorithms for solving semidefinite programs up to any precision (under some mild assumptions; see, e.g., \cite{dKV16} and further references therein).

\subsection{Flatness and extraction of optimal solutions} \label{sec:flatty}

As recalled in Theorem \ref{theoconvGMP}, if the quadratic module $\M(\bg)$ is Archimedean (i.e., $R-\sum_i x_i^2\in\M(\bg)$ for some $R>0$), then the bounds $\xiid_t$ converge asymptotically to $\xiid_\infty$. In addition, if the Slater-type condition (\ref{eqCQ}) holds, then $\xiid_\infty=\valid$ and problem (\ref{opt:dense}) has a finite atomic optimal solution $\mu$, i.e., supported on finitely many points in  $K$.

A remarkable property of the bounds $\xiid_t$ is that they often exhibit {\em finite} convergence.
Indeed, there is an (easy to check) criterion, known as the {\em flatness condition}, which permits to conclude that the level $t$ bound is exact, i.e., $\xiid_t=\valid$, and to extract a finite atomic optimal solution of GMP. This flatness condition, see  (\ref{eqflat}) below,  goes back to work of Curto and Fialkow \cite{CF96,CF00}. We also refer, e.g., to \cite{Las2009,Laurent2009} for a detailed  exposition of the following result.
For details  on how to extract an atomic optimal solution under the flatness condition (\ref{eqflat}), we refer to \cite{HL05,Laurent2009}.

\begin{theorem}  \cite{CF96,CF00} \label{CurtoFialkowr_flat_ext_atom}
Consider the set $K$ from (\ref{eqsetK})  and set $d_K=\max\{1, \lceil\deg(g_j)/2\rceil: j\in [m]\}$.
Let $t\in \N$ such that  $2t\ge \max\{\deg(f_i): 0\le i\le N\}$ and  $t\ge d_K$.
Assume  $L\in\R[x]_{2t}^*$ is an optimal solution to the program (\ref{eqxiidt}) defining the parameter $\xiid_t$ and it satisfies the following {\em flatness condition}:
\begin{equation}\label{eqflat}
\rank\ L([x]_s[x]_s^T) =\rank\ L([x]_{s-d_K}[x]_{s-d_K}^T)=:r \ \text{ for some integer } s  \text{ such that } d_K\le s\le t.
\end{equation}
Then, equality $\xiid_t=\valid$ holds, and problem (\ref{opt:dense}) has an optimal solution $\mu$ that is finite atomic and supported on $r$ points in $K$.
\end{theorem}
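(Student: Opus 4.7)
The plan is to apply the flat extension theorem of Curto-Fialkow, which is the structural core behind this statement. First I would consider the moment matrix $M_s(L) := L([x]_s[x]_s^T)$ at the flatness level $s$. The hypothesis $\rank M_s(L) = \rank M_{s-d_K}(L) = r$ states that $M_s(L)$ is a rank-preserving (flat) extension of $M_{s-d_K}(L)$. The flat extension theorem then guarantees that $M_s(L)$ itself extends, step by step, to an infinite positive semidefinite moment matrix $M_\infty(\widetilde L)$ of rank $r$, corresponding to a linear functional $\widetilde L \in \R[x]^*$ which coincides with $L$ on $\R[x]_{2s}$.

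Next I would invoke the classical Curto-Fialkow representation theorem: any positive semidefinite infinite moment matrix of finite rank $r$ arises from a unique $r$-atomic positive Borel measure $\mu = \sum_{\ell=1}^r \lambda_\ell \delta_{a_\ell}$, where the distinct points $a_\ell \in \R^n$ are the common real zeros of the kernel ideal of $M_\infty(\widetilde L)$, and the weights $\lambda_\ell$ are positive. Note that it is crucial here that the zeros are real (not complex), which is a consequence of the positive semidefiniteness of $M_\infty(\widetilde L)$ together with its finite rank.

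The key step is then to verify $\supp(\mu) \subseteq K$. For each inequality constraint $g_j \ge 0$, the localizing condition $L(g_j [x]_{t-\lceil\deg(g_j)/2\rceil}[x]_{t-\lceil\deg(g_j)/2\rceil}^T) \succeq 0$ transfers through the flat extension and gives $\int q^2 g_j \, d\mu \ge 0$ for a rich family of polynomials $q$; specializing $q$ to interpolate each atom separately (using that $\mu$ is finitely supported) yields $\lambda_\ell g_j(a_\ell) \ge 0$, whence $g_j(a_\ell) \ge 0$. For the ideal constraints, the condition $L = 0$ on $I_{E,2t}$ propagates to $\widetilde L(x_ix_j \cdot q) = 0$ for all $q \in \R[x]$, whence $\int x_ix_j \, q \, d\mu = 0$; varying $q$ over an interpolation basis forces $(a_\ell)_i (a_\ell)_j = 0$ for every $\{i,j\} \in \olE$ and every atom $a_\ell$. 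Hence every $a_\ell$ belongs to $K$.

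Finally, because $\widetilde L$ agrees with $L$ on polynomials of degree at most $2s$ and, by the uniqueness of flat PSD extensions together with the positive semidefiniteness of $M_t(L)$ inherited from $L$, also on $\R[x]_{2t}$, the identities $\int f_i \, d\mu = L(f_i) = a_i$ for $i \in [N]$ and $\int f_0 \, d\mu = L(f_0) = \xiid_t$ hold. Thus $\mu$ is a finite atomic feasible solution for (\ref{opt:dense}) with objective value $\xiid_t$, and combined with the standard inequality $\xiid_t \le \valid$ this gives $\xiid_t = \valid$ and exhibits $\mu$ as an $r$-atomic optimal solution supported on $K$. The main obstacle is really the flat extension step itself---constructing $\widetilde L$ and establishing agreement with $L$ through degree $2t$---which is the content of Curto-Fialkow and is carried out by an inductive column-space argument on the moment matrices, but it can be invoked as an established result.
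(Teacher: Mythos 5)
The paper does not actually prove this theorem: it is quoted from Curto--Fialkow, with \cite{Las2009,Laurent2009} cited for detailed expositions. Your sketch follows exactly the standard route of those references (flat extension theorem, finite-rank PSD infinite moment matrix $\Rightarrow$ $r$-atomic representing measure, propagation of the localizing and ideal conditions to place the atoms in $K$), and those steps are all sound as invocations of established results.

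There is, however, one genuine gap: the claim that $\widetilde L$ agrees with $L$ on all of $\R[x]_{2t}$ ``by the uniqueness of flat PSD extensions together with the positive semidefiniteness of $M_t(L)$.'' This does not follow, and it is false in general when $s<t$. Take $n=1$, $m=0$ (so $d_K=1$), $s=1$, $t=2$, and define $L\in\R[x]_4^*$ by $L(x^k)=a^k$ for $k\le 3$ and $L(x^4)=a^4+1$. Then $M_2(L)=vv^T+e_3e_3^T\succeq 0$ with $v=(1,a,a^2)^T$, and $\rank M_1(L)=\rank M_0(L)=1$, so flatness holds at level $s=1$; yet the unique flat extension of $M_1(L)$ is the functional of $\delta_a$, which disagrees with $L$ in degree $4$. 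The point is that $M_t(L)$ is merely PSD, not itself flat over $M_s(L)$, so its entries of degree between $2s+1$ and $2t$ are not pinned down by the kernel of $M_s(L)$. Consequently your final step, $\int f_i\,d\mu=L(f_i)$, is only justified when $\deg f_i\le 2s$; if some $f_i$ has degree in $(2s,2t]$ the extracted measure need not even be feasible for (\ref{opt:dense}). The standard statements in the cited references avoid this by requiring $s\ge\max\{d_K,\lceil\deg f_i/2\rceil\}$ (or by placing the flatness condition at the top level $s=t$), and you should add that hypothesis or restrict to that case; note that in all applications in this paper the $f_i$ are quadratic and $s\ge 1$, so the issue is invisible there, but your argument as written does not establish the theorem in the generality stated.
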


The above result naturally applies also to the sparse reformulation (\ref{opt:sparse}) of GMP and to the sparse hierarchy $\xisp_t$ in (\ref{eqxispt}).
Indeed, it suffices to apply Theorem \ref{CurtoFialkowr_flat_ext_atom} to each of the linear functionals $L_k$ and to check whether $L_k$ satisfies the corresponding flatness criterion.
We adapt the result to this setting for concreteness. 

\begin{corollary}\label{corsparse}
Consider the sets $K$ in (\ref{eqsetK}) and $K_k$ in (\ref{eqKk}) and set 
$d_{K_k}=\max\{1, \lceil \deg((g_j)_{|V_k})/2\rceil: j\in [m]\}$ for $k\in [p]$.
Let $t\in\N$ such that $2t\ge \max\{\deg(f_i): 0\le i\le m\}$ and $t\ge \max\{d_{K_k}: k\in [p]\}$.
Assume $(L_1,\ldots,L_p)$ is an optimal solution to the program (\ref{eqxispt}) defining $\xisp_t$ and it satisfies the flatness condition: for each $k\in [p]$ there exists an integer $s_k$ such that $d_{K_k}\le s_k \le t$ and the following holds
\begin{equation}\label{eqflatk}
\rank\ L_k([x(V_k)]_{s_k}[x(V_k)]_{s_k}^T) =
\rank\ L_k([x(V_k)]_{s_k-d_{K_k}}[x(V_k)]_{s_k-d_{K_k}}^T) =:r_k.
\end{equation}
 Then, equality $\xisp_t=\valsp (=\valid)$ holds, and problem (\ref{opt:sparse}) has an optimal solution $(\mu_1,\ldots,\mu_p)$, where
 each $\mu_k$ is finite atomic and supported on $r_k$ atoms in $K_k$ for each $k\in [p]$.
\end{corollary}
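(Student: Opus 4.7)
The plan is to apply the measure-extraction content of Theorem \ref{CurtoFialkowr_flat_ext_atom} separately to each of the $p$ linear functionals $L_k$ in the optimal tuple, and then assemble the resulting atomic measures into a feasible solution for the sparse GMP (\ref{opt:sparse}) whose objective matches $\xisp_t$.

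Fix $k \in [p]$. By feasibility of $(L_1,\ldots,L_p)$ for (\ref{eqxispt}), the functional $L_k \in \R[x(V_k)]_{2t}^*$ is nonnegative on the truncated quadratic module $\M(\bg_{|V_k})_{2t}$ associated with the set $K_k$ from (\ref{eqKk}); no additional ideal conditions need to be imposed on $L_k$, because $V_k$ is a clique of $G$, so no quadratic bilinear generator of $I_E$ survives when restricted to the variables $x(V_k)$. Combined with the flatness condition (\ref{eqflatk}) at level $s_k$, the Curto--Fialkow flat extension theorem (which is the moment-extraction substance of Theorem \ref{CurtoFialkowr_flat_ext_atom}) yields a finite atomic measure $\mu_k$ supported on exactly $r_k$ points of $K_k$ that represents $L_k$ in the sense that $L_k(p) = \int p\, d\mu_k$ for every $p \in \R[x(V_k)]_{2t}$.

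Next, verify that $(\mu_1,\ldots,\mu_p)$ is feasible for (\ref{opt:sparse}). The support inclusion $\supp(\mu_k) \subseteq K_k$ is built in. For each $i \in [N]$, the degree bound $2t \ge \deg(f_i) \ge \deg({f_i}_{|V_k})$ lets us invoke the representing property, giving $\int {f_i}_{|V_k}\, d\mu_k = L_k({f_i}_{|V_k})$; summing over $k$ and using the moment constraint satisfied by $(L_1,\ldots,L_p)$ produces $\sum_k \int {f_i}_{|V_k}\, d\mu_k = a_i$. The same computation with $f_0$ in place of $f_i$ shows that this feasible tuple has objective value $\sum_k \int {f_0}_{|V_k}\, d\mu_k = \sum_k L_k({f_0}_{|V_k}) = \xisp_t$. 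Hence $\valsp \le \xisp_t$; combined with the reverse inequality $\xisp_t \le \valsp$ already built into the hierarchy (any tuple of measures feasible for (\ref{opt:sparse}) integrates to a tuple of linear functionals feasible for (\ref{eqxispt}) with the same value), we conclude $\xisp_t = \valsp$, and the assembled tuple attains the infimum. The identity $\valsp = \valid$ is supplied by Proposition \ref{propequiv}.

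The one delicate point is the legitimacy of applying Theorem \ref{CurtoFialkowr_flat_ext_atom} to $L_k$ despite $L_k$ not being, on its own, the optimum of a separate program. This is not a genuine obstacle, because the extraction conclusion of Curto--Fialkow is a standalone consequence of flatness together with nonnegativity on the relevant truncated quadratic module; the optimality hypothesis in Theorem \ref{CurtoFialkowr_flat_ext_atom} is only used there to derive the equality $\xiid_t = \valid$, whereas here we obtain the analogous equality $\xisp_t = \valsp$ by the direct matching of objective values. Aside from this observation, the argument is a clean $p$-fold application of the dense flat-extension result.
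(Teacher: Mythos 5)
Your proof is correct and follows exactly the route the paper intends: the paper gives no separate proof of this corollary beyond the remark that one applies Theorem \ref{CurtoFialkowr_flat_ext_atom} to each $L_k$ individually, which is precisely what you do, including the correct observations that $K_k$ carries no residual ideal constraints (since $V_k$ is a clique) and that the extraction step of Curto--Fialkow needs only flatness plus positivity on $\M(\bg_{|V_k})_{2t}$, not optimality of $L_k$ in a standalone program.
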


Note that, for the application to the completely positive rank and the nonnegative rank, all involved polynomials in the corresponding instances of GMP are quadratic, so that $d_K=d_{K_k}=1$ and the smallest relaxation level that can be considered is $t=1$. 
For the application to the cp-rank, if the flatness condition holds for an optimal solution for the parameter $\xicpid_t(A)$  (or for the parameter $\xicpsp_t(A)$), then the parameter is equal to $\taucp(A)$ and one can extract a cp-factorization of $A$. In this way one finds an explicit factorization of $A$ and thus an upper bound on its cp-rank. In this case, if the computed value of  $\taucp(A)$ is equal to the number of recovered atoms, this certifies that  $\taucp(A)$ is equal to the cp-rank and the recovered cp-decomposition of $A$ is an optimal one. We will illustrate this on some examples in Section \ref{sec:sparsecp}. In the same way, for the application to the nonnegative rank, if the flatness condition holds for an optimal solution for the parameter $\xinnid_t(M)$ (or for the parameter $\xinnsp_t(M)$), then the parameter is equal to $\taunn(M)$ and one can extract a nonnegative factorization of $M$.

\section{Ideal-sparsity   for GMP} \label{sec:sparseGMP}

In this section we investigate how ideal-sparsity can be exploited for the GMP (\ref{opt:dense}).
First, we consider in Section \ref{sec:sparseGMPmom} the ideal-sparse reformulation (\ref{opt:sparse}) and the corresponding ideal-sparse bounds, and, after that, we mention in Section \ref{seccsp} how this relates to the more classic approach based on exploiting correlative sparsity. 

\subsection{Ideal-sparse moment relaxations} \label{sec:sparseGMPmom}

Consider the GMP (\ref{opt:dense}), where the set $K$ is defined as in (\ref{eqsetK}).
 As in Section~\ref{sec:intro}, we consider the graph $G=(V,E)$, whose maximal cliques are denoted $V_1,\ldots,V_p$, and we define the sets $\wh {K_k}\subseteq K\subseteq \R^n$ (as in (\ref{eqwhKk})) and their projections $K_k\subseteq \R^{|V_k|}$ (as in (\ref{eqKk})). Recall from (\ref{eqKkcover}) that $K=\wh{K_1}\cup\ldots\cup \wh{K_p}.$ 
 Then, one can define the (sparse) version (\ref{opt:sparse}) of GMP.
As observed above, while problem (\ref{opt:dense}) has a single  measure variable $\mu$ whose support is contained in $K\subseteq \R^n$, problem (\ref{opt:sparse}) involves $p$ measure variables $\mu_1,\ldots,\mu_p$, where $\mu_k$ is supported on the set $K_k\subseteq \R^{|V_k|}$, thus a smaller dimensional space.
We now show that both formulations (\ref{opt:dense}) and (\ref{opt:sparse}) are equivalent.

\begin{proposition}\label{propequiv}
Problems (\ref{opt:dense}) and (\ref{opt:sparse}) are equivalent, i.e., their optimum values are equal: $\valid=\valsp$.
\end{proposition}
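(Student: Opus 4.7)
The plan is to establish both inequalities $\valsp \le \valid$ and $\valid \le \valsp$ by explicit construction of feasible solutions, relying crucially on the support-covering property (\ref{eqKkcover}). The main working identity, valid whenever $x\in \wh{K_k}$ so that $x = (\pi_k(x), 0_{V\setminus V_k})$ with $\pi_k: \R^n \to \R^{|V_k|}$ the coordinate projection, is $f(x) = f_{|V_k}(\pi_k(x))$ for every function $f:\R^n\to\R$. Thus polynomial integrals against pieces of $\mu$ supported inside $\wh{K_k}$ translate directly into integrals against measures on $\R^{|V_k|}$ supported on $K_k$, and vice versa.

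For $\valsp \le \valid$, I would take any feasible $\mu$ for (\ref{opt:dense}) and construct feasible measures $\mu_k$ as follows. Since every $x\in K$ satisfies that $\supp(x)$ is a clique of $G$, we can fix a rule assigning each $x\in K$ to some maximal clique $V_{k(x)}\supseteq \supp(x)$; for instance, enumerate the cliques and pick the smallest index $k$ with $\supp(x)\subseteq V_k$. This yields a Borel partition $K = \bigsqcup_{k=1}^p A_k$ with $A_k \subseteq \wh{K_k}$. Define $\mu_k$ as the pushforward $(\pi_k)_\#(\mu|_{A_k})$, a Borel measure on $\R^{|V_k|}$ supported on $\pi_k(A_k)\subseteq K_k$. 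Using the identity above and the partition, $\sum_k \int f_{i|V_k}\,d\mu_k = \sum_k \int_{A_k} f_i\,d\mu = \int_K f_i\,d\mu$, so the moment constraints and objective value transfer verbatim; hence $(\mu_1,\dots,\mu_p)$ is feasible for (\ref{opt:sparse}) with the same value.

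For the reverse inequality $\valid \le \valsp$, given feasible $(\mu_1,\dots,\mu_p)$ for (\ref{opt:sparse}), I would lift each $\mu_k$ to $\R^n$ by the pushforward $\nu_k := (\iota_k)_\#\mu_k$ along the padding map $\iota_k: y\mapsto (y,0_{V\setminus V_k})$. By definition of $K_k$ in (\ref{eqKk}), $\iota_k(K_k) = \wh{K_k}\subseteq K$, so each $\nu_k$ is supported on $K$; hence $\mu := \sum_{k=1}^p \nu_k$ is a Borel measure on $\R^n$ with $\supp(\mu)\subseteq K$. Again by the same identity, $\int f_i\,d\mu = \sum_k \int f_i\,d\nu_k = \sum_k \int f_{i|V_k}\,d\mu_k$, so $\mu$ meets the moment constraints and achieves the same objective value, completing the equivalence.

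The only mildly delicate point is ensuring that the partition $\{A_k\}$ used in the first direction is Borel measurable. This is routine: the condition $\supp(x)\subseteq V_k$ is a conjunction of the closed conditions $x_i=0$ for $i\in V\setminus V_k$, so each set $\wh{K_k}$ is Borel (in fact closed in $K$), and a greedy assignment to the smallest-index containing $V_k$ produces a Borel partition. No further analytic obstacle arises, since both pushforwards are along continuous (indeed linear) maps, preserving Borel measurability and finiteness of the measures.
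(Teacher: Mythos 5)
Your proof is correct and takes essentially the same route as the paper: for $\valsp\le\valid$ you partition $K$ by assigning each point to the smallest-indexed maximal clique containing its support (the paper's sets $\Lambda_k$) and push forward along the coordinate projection, and for $\valid\le\valsp$ you sum the pushforwards of the $\mu_k$ along the zero-padding maps. The only addition is your explicit remark on Borel measurability of the partition, which the paper leaves implicit.
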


\begin{proof} 
First, we show
$\valid \le \valsp$.  For this, assume $(\mu_1,\ldots,\mu_p)$ is feasible for problem (\ref{opt:sparse}). Consider the measure $\mu$ on $\R^{|V|}$,  defined by $\int fd\mu=\sum_{k=1}^p \int_{K_k}f_{|V_k}d\mu_k$ for any measurable function $f$ on $\R^{|V|}$. We have $\supp(\mu)\subseteq K$. Indeed,   $\int_K fd\mu= \int f \chi^Kd\mu=\sum_k \int_{K_k} f_{|V_k} \chi^K_{|V_k} d\mu_k= \sum_k \int_{K_k}f_{|V_k} d\mu_k=\int fd\mu$, since $\chi^K_{|V_k}(y)=
\chi^K(y,0_{V\setminus V_k})=1$ for all $y\in K_k$ as $(y,0_{V\setminus V_k})\in \wh {K_k}\subseteq K$. Then, $\mu$ is feasible for (\ref{opt:dense}), with the same objective value as $(\mu_1,\ldots,\mu_p)$, which shows $\valid \le \valsp$.

We now show the reverse inequality $\valsp \le \valid$. For this, assume $\mu$ is feasible for~(\ref{opt:dense}). We now define a feasible solution $(\mu_1,\ldots,\mu_p)$ to (\ref{opt:sparse}), with the same objective value as $\mu$. 
For $k\in [p]$, define the set
$$\Lambda_k=\{x\in K: \supp(x)\subseteq V_k,\ \supp(x) \not\subseteq V_h \text{ for } 1\le h\le k-1\}.$$
As each $x\in K$ has its support contained in some $V_k$, it follows that the sets $\Lambda_1,\ldots,\Lambda_p$ form a disjoint partition of $K$. 
Note that  $\Lambda_k\subseteq \wh{K_k}$ and thus $x(V_k)\in K_k$ for any $x\in \Lambda_k$.
Consider the measure $\mu_k$ on $\R^{|V_k|}$, defined by
$\int f d\mu_k= \int _{\Lambda_k} f(x(V_k))d\mu(x)$
for any measurable function $f$ on $\R^{|V_k|}$.
Then, $\supp(\mu_k)\subseteq K_k$, since $\int_{K_k}f d\mu_k=\int f \chi^{K_k}d\mu_k= \int_{\Lambda_k} f(x(V_k)) \chi^{K_k} (x(V_k))d\mu(x)= \int_{\Lambda_k}f(x(V_k))d\mu(x)=\int f d\mu_k$, as $\chi^{K_k} (x(V_k))=1$ for all $x\in \Lambda_k$.
Next, we show that $\int pd\mu=\sum_k \int p_{|V_k}d\mu_k$ for any measurable function $p:\R^{|V|}\to\R$. 
Indeed,
as  
the sets  $\Lambda_1,\ldots,\Lambda_p$  disjointly partition the set $K$, we have 
$\int pd\mu=\int_Kpd\mu=\sum_k \int_{\Lambda_k} pd\mu$. Combining with   $\int_{\Lambda_k} p(x)d\mu(x)= \int_{\Lambda_k} p_{|V_k}(x(V_k)) d\mu(x)= \int_{K_k} p_{|V_k}d\mu_k$, gives the desired identity
$\int pd\mu=\sum_k \int p_{|V_k}d\mu_k$.
Therefore, $(\mu_1,\ldots,\mu_p)$ is a feasible solution to (\ref{opt:sparse}) with the same value as $\mu$, which shows 
$\valsp \le \valid$.
\end{proof}

Based on the reformulation (\ref{opt:sparse}), we can define the {\em ideal-sparse} moment relaxation (\ref{eqxispt}) for problem (\ref{opt:dense}), which we repeat here for convenience: for any integer $t\ge 1$,
\begin{equation}\label{eqxisptb}
\begin{array}{ll}
\xisp_t:=\inf\Big \{\sum_{k=1}^p L_k({f_0}_{|V_k}): & L_k \in \R[x(V_k)]_{2t}^* \ (k\in [p]),\\
&  \sum_{k=1}^p L_k({f_i}_{|V_k})=a_i\ (i\in [N]),\\
& L_k\ge 0\text{ on } \M(\bg_{|V_k})_{2t} \ (k\in [p])\Big\}.
\end{array}
\end{equation}
This hierarchy provides bounds for $\valid$ that are   at least at good as the bounds $\xiid_t$ from (\ref{eqxiidt}).

\begin{theorem}\label{lem:dens<spar}
 For any integer $t\ge 1$ we have $\xiid_t\le \xisp_t \le \valid$. In addition, if $\M(\bg)$ is Archimedian and~(\ref{eqCQ}) holds, then $\lim_{t\to\infty} \xisp_t = \valid$.
\end{theorem}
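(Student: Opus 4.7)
The plan has three parts, tracking the three statements in the theorem.

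\textbf{Upper bound} $\xisp_t \le \valid$. By Proposition \ref{propequiv}, it suffices to show $\xisp_t \le \valsp$. Given any feasible tuple $(\mu_1,\ldots,\mu_p)$ for (\ref{opt:sparse}), I would define linear functionals $L_k \in \R[x(V_k)]^*_{2t}$ by $L_k(p) := \int p\, d\mu_k$. The support condition $\supp(\mu_k) \subseteq K_k$ and the fact that $\M(\bg_{|V_k})_{2t}$ consists of polynomials nonnegative on $K_k$ give $L_k \ge 0$ on $\M(\bg_{|V_k})_{2t}$; the moment conditions in (\ref{opt:sparse}) transfer directly. So $(L_1,\ldots,L_p)$ is feasible for (\ref{eqxisptb}) with the same objective value.

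\textbf{Lower bound} $\xiid_t \le \xisp_t$. This is the main construction. Given $(L_1,\ldots,L_p)$ feasible for (\ref{eqxisptb}), I define $L \in \R[x]^*_{2t}$ by
\[
L(f) := \sum_{k=1}^p L_k(f_{|V_k}), \qquad f\in \R[x]_{2t}.
\]
Linearity and the matching objective value are immediate, and the affine constraints $L(f_i)=a_i$ are precisely the constraints on $(L_1,\ldots,L_p)$ in (\ref{eqxisptb}). For the quadratic module: any $\sigma_j g_j \in \M(\bg)_{2t}$ satisfies $(\sigma_j g_j)_{|V_k} = (\sigma_j)_{|V_k}\,(g_j)_{|V_k}$, and $(\sigma_j)_{|V_k}$ is again a sum of squares (substitution preserves squares), while $\deg((\sigma_j)_{|V_k}(g_j)_{|V_k})\le 2t$, so this polynomial lies in $\M(\bg_{|V_k})_{2t}$ and $L_k$ is nonnegative on it. Summing yields $L\ge 0$ on $\M(\bg)_{2t}$. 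The delicate point is the ideal condition $L = 0$ on $I_{E,2t}$, which I expect to be the main (though still easy) obstacle: for $\{i,j\}\in \olE$ and $u \in \R[x]_{2t-2}$, observe that no maximal clique $V_k$ can contain both $i$ and $j$, since $\{i,j\}$ is a nonedge of $G$. Hence for every $k$, either $i \notin V_k$ or $j \notin V_k$, so $(x_ix_j u)_{|V_k} = 0$ after substitution, and $L(x_ix_j u) = \sum_k L_k(0) = 0$.

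\textbf{Asymptotic convergence.} From the sandwich $\xiid_t \le \xisp_t \le \valid$ together with the hypothesis that $\M(\bg)$ is Archimedean and condition (\ref{eqCQ}) holds, Theorem \ref{theoconvGMP} gives $\lim_{t\to\infty}\xiid_t = \valid$. The squeeze then forces $\lim_{t\to\infty}\xisp_t = \valid$, completing the proof.
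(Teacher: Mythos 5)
Your proposal is correct and follows essentially the same route as the paper: the key construction $L(f)=\sum_k L_k(f_{|V_k})$, the observation that no maximal clique contains a nonedge $\{i,j\}$ so that $(x_ix_ju)_{|V_k}=0$, and the squeeze via Theorem \ref{theoconvGMP} are exactly the paper's argument. You merely spell out in more detail the (trivial) direction $\xisp_t\le\valsp$ and the fact that restriction preserves sums of squares, which the paper leaves implicit.
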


\begin{proof}
Clearly, $\xisp_t\le \valsp$, which, combined with Proposition \ref{propequiv}, gives $\xisp_t\le \valid$.
We now show $\xiid_t\le \xisp_t$. For this, assume $(L_1,\ldots,L_p)$ is feasible for (\ref{eqxisptb}). Define $L\in \R[x]_{2t}^*$ by setting $L(p)=\sum_{k=1}^p L_k(p_{|V_k})$ for any $p\in \R[x]_{2t}$. By construction, $L(f_i)=\sum_k L_k({f_i}_{|V_k})$ for $0\le i\le m$, so that $L(f_i)=a_i$ for $i\in [m]$),  and $L\ge 0$ on $\M(\bg)$. For each $\{i,j\}\in \olE$ and $k\in [p]$,  we have $\{i,j\}\not\subseteq V_k$ and thus ${(x_ix_j)}_{|V_k}$ is identically zero; hence, for any $u\in\R[x]_{2t-2}$, we have
$L(ux_ix_j)=\sum_k L_k(u_{|V_k} {(x_ix_j)}_{|V_k})=0$.
Hence, $L$ is  feasible for (\ref{eqxiidt}) with the same objective value as $(L_1,\ldots,L_p)$, which shows $\xiid_t\le \xisp_t$. Convergence of $\xisp_t$ to $\valid$ follows from the just proven fact that $\xiid_t \le \xisp_t$ and from Theorem~\ref{theoconvGMP}, which implies $\lim_{t\to\infty}  \xiid_t = \valid$ under the stated assumptions.
\end{proof}

Observe that in Theorem \ref{lem:dens<spar} no structural chordality property needs to be assumed on the cliques $V_1,\ldots,V_p$ of the graph $G$.
In other words, the cliques $V_1,\ldots,V_p$ need not satisfy the running intersection property (see (\ref{eqVRIP}) below), which is a characterizing property of chordal graphs that is often used in  sparsity exploiting techniques like correlative sparsity.
In Section \ref{seccsp} below, we will comment about the link between the ideal-sparsity approach presented here and the more classical  correlative sparsity approach  that can be followed when considering a chordal extension $\wh G$ of the graph $G$.

\medskip

As  mentioned earlier in the introduction, the sparse bounds $\xisp_t$ present a double advantage compared to the dense bounds $\xiid_t$: they are at least as good (and often strictly  better), and their computation is potentially faster since  the sets $V_k$ can be much smaller than the full set $V$. We will see later examples illustrating this. 
On the other hand, a possible drawback is that the number of maximal cliques of $G$ could be large.   Indeed, it is well-known that the number of maximal cliques  can be exponential in the number of nodes (this is the case, e.g., when $G$ is a complete graph on $2n$ nodes with a deleted perfect matching).  A possible remedy is to consider a graph $\wt G=(V, \wt E)$ containing  $G$ as a subgraph, i.e., such that $E\subseteq \wt E$.
Then, let $\wt V_1,\ldots,\wt V_{\wt p}$ denote the maximal cliques of $\wt G$, whose number $\wt p$ satisfies $\wt p \le p$, since each maximal clique of $G$ is contained in a maximal clique  of $\wt G$. 
One can define the corresponding ideal-sparse moment hierarchy of bounds, denoted  $\wtxisp_t$, which involves $\wt p$ measure variables supported on  the sets $\wt V_1,\ldots, \wt V_{\wt p}$ (instead of the sets $V_1,\ldots, V_p$). However, as $\wt V_h$ may contain some non-edge of $G$,  one now needs to still impose an ideal condition on each linear functional $\wt L_h$ acting on $\R[x(\wt V_h)]$ ($h\in [\wt p]$). Namely, the parameter 
$\wtxisp_t$ is defined as
\begin{equation} \label{eqwtxispt}
\begin{array}{ll}
\wtxisp_t:=\inf\Big \{\sum_{h=1}^{\wt p}  \wt L_h({f_0}_{|\wt V_h}): &\wt L_h \in \R[x(\wt V_h)]_{2t}^* \ (h\in [\wt p]),\\
&  \sum_{h=1}^{\wt p} \wt L_h({f_i}_{|\wt V_h})=a_i\ (i\in [N]),\\
& \wt L_h\ge 0\text{ on } \M(\bg_{|\wt V_h})_{2t} \ (h\in [\wt p]),\\
& \wt L_h(x_ix_jx^\alpha)=0 \ (\alpha \in \N^n_{2t-2},~\supp(\alpha)\subseteq \wt V_h,\ \{i,j\}\subseteq \wt V_h,\ \{i,j\}\in \olE) \Big\}.
\end{array}
\end{equation}
Note that this parameter interpolates between the dense and sparse parameters: indeed, $\wtxisp_t=\xisp_t$ if $\wt G=G$, and $\wtxisp_t=\xiid_t$ if $\wt G=K_n$ is the complete graph.
Accordingly, we have the following inequalities among the parameters.

\begin{lemma}
Assume $\wt G$ contains $G$ as a subgraph. For any integer $t\ge 1$ we
have $\xiid_t\le \wtxisp_t\le \xisp_t$.
\end{lemma}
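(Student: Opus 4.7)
The plan is to prove the two inequalities separately. For $\xiid_t \le \wtxisp_t$, the argument mirrors the proof in Theorem \ref{lem:dens<spar}. Given a feasible tuple $(\wt L_h)_{h=1}^{\wt p}$ for the program defining $\wtxisp_t$, I would aggregate it into a single linear functional $L \in \R[x]_{2t}^*$ by setting
\[
L(p) := \sum_{h=1}^{\wt p} \wt L_h(p_{|\wt V_h}) \qquad (p \in \R[x]_{2t}).
\]
Linearity gives $L(f_i) = a_i$ for $i \in [N]$ and matches the objective values. Nonnegativity on $\M(\bg)_{2t}$ follows because restricting a sum $\sum_j \sigma_j g_j$ to $\wt V_h$ yields an element of $\M(\bg_{|\wt V_h})_{2t}$. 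The delicate point is checking $L = 0$ on $I_{E,2t}$: for any $\{i,j\} \in \olE$ and $u \in \R[x]_{2t-2}$ one expands $L(u x_i x_j) = \sum_h \wt L_h((u x_i x_j)_{|\wt V_h})$, and splits according to whether $\{i,j\} \subseteq \wt V_h$ or not. The latter terms vanish because $(x_i x_j)_{|\wt V_h} = 0$, and the former vanish by the extra ideal constraint built into (\ref{eqwtxispt}).

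For $\wtxisp_t \le \xisp_t$, I would go in the opposite direction: take a feasible tuple $(L_k)_{k=1}^p$ for $\xisp_t$ and build a feasible tuple for $\wtxisp_t$. Since $\wt G \supseteq G$, each maximal clique $V_k$ of $G$ is contained in at least one maximal clique of $\wt G$; I would fix a function $h : [p] \to [\wt p]$ with $V_k \subseteq \wt V_{h(k)}$ and define
\[
\wt L_h(q) := \sum_{k \,:\, h(k) = h} L_k(q_{|V_k}) \qquad (q \in \R[x(\wt V_h)]_{2t}).
\]
Because restriction is transitive, $({f_i}_{|\wt V_h})_{|V_k} = {f_i}_{|V_k}$ whenever $V_k \subseteq \wt V_h$, so summing over $h$ reproduces $\sum_k L_k({f_i}_{|V_k})$; this yields both equality of objectives and preservation of the affine constraints. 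Nonnegativity $\wt L_h \ge 0$ on $\M(\bg_{|\wt V_h})_{2t}$ follows because restricting to $V_k$ sends $\M(\bg_{|\wt V_h})_{2t}$ into $\M(\bg_{|V_k})_{2t}$ (sums of squares restrict to sums of squares).

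The ideal condition in (\ref{eqwtxispt}) is the step I would want to verify most carefully, though it turns out to be automatic: for $\{i,j\} \in \olE$ with $\{i,j\} \subseteq \wt V_h$, any $k$ with $h(k) = h$ satisfies $V_k \subseteq \wt V_h$, but $V_k$ is a clique of $G$, so $V_k$ cannot contain both $i$ and $j$. Hence $(x_i x_j)_{|V_k} = 0$, and so every term in the defining sum for $\wt L_h(x_i x_j x^\alpha)$ is zero. The main obstacle is really only notational: one must keep the restriction operators and clique inclusions straight to see that everything composes as expected. Once the setup is in place, both inequalities fall out; the chain $\xiid_t \le \wtxisp_t \le \xisp_t$ then follows, and as sanity checks one recovers $\wtxisp_t = \xisp_t$ when $\wt G = G$ and $\wtxisp_t = \xiid_t$ when $\wt G = K_n$.
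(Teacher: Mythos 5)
Your proposal is correct and follows essentially the same approach as the paper: for $\xiid_t\le \wtxisp_t$ you aggregate the $\wt L_h$'s into a single functional exactly as in the proof of Theorem~\ref{lem:dens<spar}, and for $\wtxisp_t\le \xisp_t$ your assignment $h:[p]\to[\wt p]$ with $V_k\subseteq \wt V_{h(k)}$ is the paper's partition $[p]=A_1\cup\ldots\cup A_{\wt p}$ in different notation, with the same verification of the ideal constraint via $\{i,j\}\not\subseteq V_k$ for any clique $V_k$ of $G$. You merely spell out a few of the routine checks that the paper leaves as ``one can easily verify.''
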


\begin{proof}
The proof for the inequality $\xiid_t\le \wtxisp_t$ is analogous to the proof of $\xiid_t\le \xisp_t$ in Theorem \ref{lem:dens<spar}. We now show $\wtxisp_t\le \xisp_t$. For this, assume $(L_1,\ldots,L_p)$ is feasible for the parameter $\xisp_t$. As each clique $V_k$ of $G$ is contained in some clique $\wt V_h$ of $\wt G$, there exists a  partition $[p]= A_1\cup \ldots\cup A_{\wt p}$ such that 
$V_k\subseteq \wt V_h$ for all $k\in A_h$ and $h\in [\wt p]$.
For $h\in [\wt p]$, we define $\wt L_h\in \R[x(\wt V_h)]_{2t}^*$ by setting 
$\wt L_h(p)= \sum_{k\in A_h} L_k(p_{|V_k})$ for $p\in \R[x(\wt V_h)]_{2t}$.
Then, one can easily verify that $(\wt L_1,\ldots,\wt L_{\wt p})$ provides a feasible solution for $\wtxisp_t$, with the same objective value as $(L_1,\ldots,L_p)$. Let us only check the ideal constraint. For this assume  $\{i,j\}\cup \supp(\alpha)\subseteq \wt V_h$ and $\{i,j\}\in\olE$. Then, $\{i,j\}$ is not contained in any clique $V_k$ of $G$ and thus $L_k((x_ix_jx^\alpha)_{|V_k})=0$ for all $k\in [p]$, which directly implies $\wt L_h(x_ix_jx^\alpha)=0$.
\end{proof}

\begin{remark}\label{remsparsechordal}
There may be  a trade-off to be made between  the parameter $\xisp_t$, which fully exploits  the sparsity of $G$ (and provides a possibly better bound), and  the parameter $\wtxisp_t$, which only partially exploits the sparsity, depending on the choice of the extension $\wt G$ of $G$. 
Namely, the parameter  $\xisp_t$ may involve many cliques of smaller sizes, while the parameter $\wtxisp_t$ involves less cliques but with larger sizes. 
If one cares to have a small number of cliques, then one can (but is not required to) consider  for $\wt G$ a chordal extension $\wh G$ of $G$, in which case the number of maximal cliques is at most the number of nodes.

In our numerical experiments for matrix factorization ranks we will consider only the two extreme cases of the dense and ideal-sparse parameters $\xiid_t$ and $\xisp_t$. For most of the matrices considered  the number of maximal cliques seems  indeed not to play a significant role. However, when this number becomes too large, one may have to consider alternative intermediate parameters (see Section \ref{sec:final} for a brief discussion).
\end{remark}

\subsection{Bounds based on correlative sparsity}\label{seccsp}

In this section we compare the ideal-sparse approach with the more classic one based on exploiting correlative sparsity.
The setting of correlative sparsity is usually applied to a polynomial optimization problem, where each of the polynomials arising as a constraint involves only a subset of the variables (indexed, say,
by one of the subsets $\wV_1,\ldots,\wV_{\widehat p}$) and the objective polynomial is a sum of such polynomials. Then, one can define more economical relaxations that respect this sparsity pattern. In the case when the sets $\wV_1,\ldots,\wV_{\widehat p}$ respect the so-called RIP property (see (\ref{eqVRIP}) below) (and under some Archimedean condition), these hierarchies enjoy asymptotic convergence properties analogous to the dense hierarchies; see \cite{lasserre2006convergent,GrimmNS} for details and also \cite{sparsebook} for general background on correlative sparsity.
We now explain  how correlative sparsity applies to the instance of GMP considered in this paper.

\medskip
As before,  we assume $K$ is contained in the variety of the ideal $I_E$, generated by the monomials $x_ix_j$ corresponding to the nonedges of the graph $G=(V,E)$. In the ideal-sparsity approach we considered a measure variable for each maximal clique of $G$. However, the number  of maximal cliques of $G$ can be large, which could represent a drawback for this approach. 

An alternative is to consider a {\em chordal extension} $\wh G=(V, \wh E)$ of $G$, that is, a chordal graph $\wh G$ containing $G$ as a subgraph, i.e., such that  $E\subseteq \wh E$. 
Then, as a well-known property of chordal graphs,   $\wh G$ has at most $n$ distinct maximal cliques. Let
$\wV_1,\ldots,\wV_\whp$ denote the maximal cliques of $\wG$, so $\whp\le n$.
As one of the many equivalent definitions of chordal graphs, it is known that  the maximal cliques $\wV_1,\ldots,\wV_\whp$ satisfy (possibly after  reordering) the 
 so-called {\em running intersection property} (RIP):
\begin{equation}\label{eqVRIP} 
\forall k\in \{2,\ldots, \whp\}\ \ \exists j\in \{1,\ldots,k-1\}\ \ \text{ such that }\ \ \wV_k\cap(\wV_1\cup\ldots \cup \wV_{k-1})\subseteq \wV_j.
\end{equation}
See, e.g., \cite{D17} for details. As we explain below, it turns out that one can `transport' the chordal sparsity structure of the graph $\wG$ to the moment matrices involved in the definition of  the  dense bound $\xiid_t$ in (\ref{eqxiidt}). 

\medskip
To see this, let us first    rewrite the parameter $\xiid_t$  more concretely  as a semidefinite program.
For convenience, set $d_j:=\lceil \deg(g_j)/2\rceil$ for $j\in [m]$. Then, following the discussion in Section \ref{sec:prelmom},
 the parameter $\xiid_t$ can be expressed as
\begin{equation}\label{eqxiidb}
\begin{array}{ll}
\xiid_t= \inf\{L(f_0): & L\in\R[x]_{2t}^*,\ L(f_i)=a_i\ (i\in [N]),\\
& L([x]_t[x]_t^T)\succeq 0,\ L(g_j[x]_{t-d_j}[x]_{t-d_j}^T)\succeq 0 \ (j\in [m]),\\
& 
L=0 \text{ on } I_{E,2t}, \text{ i.e.,}\ L(x_ix_jx^\alpha)=0 \ (\{i,j\}\in\olE,\  \alpha\in \N^{n}_{2t-2})\}.
\end{array}
\end{equation}
For fixed  $t\in \N$, define the  sets
\begin{equation}\label{eqMIt}
\MI_{k,t}=\{\alpha \in \N^n_t: \supp(\alpha)\subseteq \wV_k\}\subseteq \N^n_t \  (k\in [\whp]),  \quad  
\MI_t=\bigcup_{k=1}^\whp \MI_{k,t}\subseteq \N^n_t.
\end{equation}

\begin{lemma}\label{lemRIP0}
Assume  $L\in \R[x]_{2t}^*$ satisfies  $L=0$ on $I_{E,2t}$. Then, 
 $L(x^\alpha x^\beta)=0$ for any $\alpha, \beta \in \N^n_t$ such that $\{\alpha,\beta\}$ is not contained in any of the sets $\MI_{1,t},\ldots,\MI_{\whp,t}$. 
\end{lemma}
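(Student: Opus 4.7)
The plan is to translate the assumption on $\alpha,\beta$ into a graph-theoretic statement about $\supp(\alpha+\beta)$, and then use the key observation that $x^\alpha x^\beta = x^{\alpha+\beta}$ so that factoring out an appropriate $x_ix_j$ places the monomial in $I_{E,2t}$.

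More concretely, I would first note that $\supp(\alpha+\beta) = \supp(\alpha)\cup\supp(\beta)$. The hypothesis that $\{\alpha,\beta\}\not\subseteq \MI_{k,t}$ for every $k\in[\widehat p]$ is precisely the statement that no single maximal clique $\wV_k$ of $\wh G$ contains $\supp(\alpha)\cup\supp(\beta)$. Since every clique of $\wh G$ is contained in some maximal clique, this means $S:=\supp(\alpha)\cup\supp(\beta)$ is \emph{not} a clique of $\wh G$. Hence there exist distinct $i,j\in S$ with $\{i,j\}\notin \wh E$. Because $E\subseteq \wh E$, we also have $\{i,j\}\notin E$, so $\{i,j\}\in\olE$.

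Now I would exploit the fact that $i,j\in\supp(\alpha+\beta)$, which means $(\alpha+\beta)_i\ge 1$ and $(\alpha+\beta)_j\ge 1$. Therefore $x^\alpha x^\beta = x^{\alpha+\beta}$ can be written as $x_ix_j\cdot x^\gamma$ for some $\gamma\in\N^n$ with $|\gamma| = |\alpha|+|\beta|-2\le 2t-2$. By the definition of $I_{E,2t}$ in (\ref{eqIt}), this shows $x^\alpha x^\beta \in I_{E,2t}$. Applying the assumption $L=0$ on $I_{E,2t}$ yields $L(x^\alpha x^\beta)=0$, as desired.

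There is essentially no main obstacle here; the proof is a direct unfolding of definitions. The only subtlety worth being careful about is the implication ``not contained in any \emph{maximal} clique $\Rightarrow$ not a clique,'' which relies on the standard fact that every clique extends to a maximal one, together with the inclusion $E\subseteq \wh E$ that lets us transfer the non-edge from $\wh G$ to $G$.
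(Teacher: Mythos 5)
Your proof is correct and follows essentially the same route as the paper's: translate the hypothesis into the statement that $\supp(\alpha)\cup\supp(\beta)$ is not a clique, extract a pair $\{i,j\}\in\olE$ from it, and conclude $x^\alpha x^\beta\in I_{E,2t}$. The only cosmetic difference is that you argue via cliques of $\wh G$ and transfer the non-edge to $G$ using $E\subseteq\wh E$, whereas the paper argues directly with cliques of $G$; both are valid and your version is, if anything, slightly more explicit about the degree bound.
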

\begin{proof}
Assume  there is no index $k\in [\wh p]$ such that $\{\alpha,\beta\}\subseteq \MI_{k,t}$. Then, $\supp(\alpha+\beta)$ is not a clique in $G$, for otherwise  it would be contained in some $\wV_k$, implying $\supp(\alpha),\supp(\beta)\subseteq \wV_k$ and thus $\alpha,\beta\in \MI_{k,t}$, yielding a contradiction. As $\supp(\alpha+\beta)$ is not a clique in $G$, it contains a pair $\{i,j\}\in \olE$, which implies $x^\alpha x^\beta\in I_{E,2t}$ and thus $L(x^\alpha x^\beta)=0$.
\end{proof}

In view of Lemma \ref{lemRIP0}, in the definition of $\xiid_t$ in (\ref{eqxiidb}),  one may restrict the  matrix $L([x]_t[x]_t^T)$ to its principal submatrix indexed by $\MI_t$, since any  row/column indexed by $\alpha\in \N^n_t\setminus \MI_t$ is identically zero. Moreover, $L(x^\alpha x^\beta)\ne 0$ implies $\{\alpha,\beta\}\subseteq \MI_{k,t}$ for some $k\in [\whp]$. In other words,  the support graph of the matrix $L([x]_t[x]_t^T)$ is contained in the graph  with vertex set $\MI_t$, whose  maximal cliques are the sets $\MI_{1,t},\ldots,\MI_{\whp,t}$.
The next lemma shows that the RIP property also holds for the sets $\MI_{1,t},\ldots, \MI_{\whp,t}$. 
Therefore,  the moment matrix $M_t(L)=L([x]_t[x]_t^T)$ has a correlative sparsity pattern, which it inherits from  the chordal extension $\wh G$ of $G$.

\begin{lemma}\label{lemRIPMI}
The sets $\MI_{1,t},\ldots,\MI_{\whp,t}$ satisfy the RIP property:
\begin{align}\label{RIPMI}
\forall q\in \{2,\ldots, \whp\}\ \ \exists k\in \{1,\ldots,q-1\}\ \ \text{ such that }\ \ 
\MI_{q,t} \cap(\MI_{1,t} \cup\ldots \cup \MI_{q-1,t})\subseteq \MI_{k,t}.
\end{align}
\end{lemma}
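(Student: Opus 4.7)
The plan is to transfer the RIP property from the clique family $\wV_1, \ldots, \wV_{\whp}$ of the chordal extension $\wh G$ directly to the index family $\MI_{1,t}, \ldots, \MI_{\whp,t}$, by using the fact that membership $\alpha \in \MI_{k,t}$ depends on $\wV_k$ only through the support inclusion $\supp(\alpha) \subseteq \wV_k$.

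First, I would fix $q \in \{2,\ldots,\whp\}$ and invoke the RIP for the chordal maximal cliques (equation~(\ref{eqVRIP})): there exists $k \in \{1,\ldots,q-1\}$ such that $\wV_q \cap (\wV_1 \cup \ldots \cup \wV_{q-1}) \subseteq \wV_k$. I claim this same index $k$ witnesses RIP for the $\MI_{\cdot,t}$.

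To prove the claimed inclusion $\MI_{q,t} \cap (\MI_{1,t} \cup \ldots \cup \MI_{q-1,t}) \subseteq \MI_{k,t}$, I would take an arbitrary $\alpha$ in the left-hand side. From $\alpha \in \MI_{q,t}$ I get $\supp(\alpha) \subseteq \wV_q$ and $|\alpha| \le t$, and from $\alpha \in \MI_{j,t}$ for some $j \in \{1,\ldots,q-1\}$ I get $\supp(\alpha) \subseteq \wV_j$. Combining, $\supp(\alpha) \subseteq \wV_q \cap \wV_j \subseteq \wV_q \cap (\wV_1 \cup \ldots \cup \wV_{q-1}) \subseteq \wV_k$, so $\alpha \in \MI_{k,t}$ as desired.

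There is essentially no obstacle here: the argument is a one-line set-theoretic reduction. The only thing to be careful about is that the same index $k$ must be chosen uniformly over all $\alpha$ in the intersection, which is automatic because the choice of $k$ in~(\ref{eqVRIP}) depends only on $q$, not on $\alpha$. The degree bound $|\alpha| \le t$ is preserved trivially since $\alpha$ is unchanged throughout the argument.
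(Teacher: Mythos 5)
Your proof is correct, and it takes a cleaner route than the paper. The paper argues by contradiction: assuming no index $k \in [q-1]$ works, it picks for each $k$ a witness $\alpha^k \in \MI_{q,t}\cap(\MI_{1,t}\cup\cdots\cup\MI_{q-1,t})\setminus\MI_{k,t}$, extracts from each a coordinate $i_k \in \supp(\alpha^k)$ lying outside $\wV_k$, shows $i_k \in \wV_q\cap(\wV_1\cup\cdots\cup\wV_{q-1})$, and then uses the RIP of the $\wV$'s to force a contradiction. Your argument instead observes directly that the index $k$ furnished by the RIP for the $\wV$'s is itself a valid RIP witness for the $\MI$'s, since $\alpha \in \MI_{q,t}\cap\MI_{j,t}$ forces $\supp(\alpha)\subseteq \wV_q\cap\wV_j \subseteq \wV_q\cap(\wV_1\cup\cdots\cup\wV_{q-1})\subseteq\wV_k$, and the degree bound $|\alpha|\le t$ is carried along unchanged. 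This is a genuine improvement in exposition: it is a one-step direct verification rather than a counting/contradiction argument, it makes the transfer of RIP through the map $\wV_k \mapsto \MI_{k,t}$ transparent, and it identifies explicitly \emph{which} index witnesses the RIP at the $\MI$ level (the same one as at the $\wV$ level), which the paper's proof establishes only indirectly.
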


\begin{proof}
Let $q\in \{2,\ldots,\whp\}$ and assume by way of contradiction that there exists no $k\in [q-1]$ for which $\MI_{q,t}\cap (\MI_{1,t}\cup \ldots \cup \MI_{q-1,t})\subseteq \MI_k$ holds.
Then, for each $k\in [q-1]$, there exists $\alpha^k\in \MI_{q,t}\cap (\MI_{1,t}\cup \ldots \cup \MI_{q-1,t})\setminus \MI_{k,t}$ and thus there exists $i_k\in V\setminus \wh V_k$ such that $\alpha^k_{i_k}\ge 1$. 
As $\alpha^k\in \MI_{q,t}$ and $\alpha^k_{i_k}\ge 1$, it follows that $i_k\in \wh V_q$.
In addition, $\alpha^k\in \MI_{j,t}$ for some $j\in [q-1]$. Again, as $\alpha^k_{i_k}\ge 1$, it follows that $i_k\in \wh V_j$. This shows that $$i_k\in \wh V_q\cap(\wh V_1\cup \ldots \cup \wh V_{q-1}) \quad \text{ for all } k\in [q-1].$$
By the RIP property (\ref{eqVRIP}) for  $\wh V_1,\ldots,\wh V_p$, there exists $q_0\in [q-1]$ such that $\wh V_q\cap(\wh V_1\cup\ldots \cup \wh V_{q-1})\subseteq \wh V_{q_0}$. Therefore, $i_k\in \wh V_{q_0}$ for all $k\in [q-1]$.
As $i_k\not\in \wh V_k$, this implies that $q_0\ne k$ for all $k\in [q-1]$, and we reach a contradiction.
\end{proof}

The above extends  to the localizing matrices $L(g_j[x]_{t-d_j}[x]_{t-d_j}^T)$ for $j\in [m]$.
In the same way, one may restrict the  matrix $L(g_j[x]_{t-d_j}[x]_{t-d_j}^T)$ to its principal submatrix indexed by $\MI_{t-d_j}$ and its support graph is contained in the graph with vertex set  $\MI_{t-d_j}$, whose  maximal cliques are the sets $\MI_{1,t-d_j},\ldots,\MI_{\whp,t-d_j}$.
Moreover,  there is a correlative sparsity pattern on the matrix $L(g_j[x]_{t-d_j}[x]_{t-d_j}^T)$ ($0\le j\le m$), which is inherited from the chordal structure of $\wG$.

\medskip
Therefore, one may apply Theorem \ref{theoAHMCR} below to get a more economical reformulation of $\xiid_t$.
Indeed, by Theorem \ref{theoAHMCR}, 
one may write $ L(g_j[x]_{t-d_j}[x]_{t-d_j}^T) =\sum_{k=1}^\whp Z_{j,k}$, where $Z_{j,k}$ is obtained from a matrix  indexed by the set $\MI_{k,t-d_j}$ by padding it with zero entries, and replace the condition $ L(g_j[x]_{t-d_j}[x]_{t-d_j}^T) \succeq 0$ by the conditions $Z_{j,1},\ldots,Z_{j,\whp}\succeq 0$. 
The advantage is that requiring $Z_{j,k}\succeq 0$ boils down to checking positive semidefiniteness of   a potentially much smaller matrix, indexed by $\MI_{k,t-d_j}$. Hence, this allows to replace 
one (large) positive semidefinite matrix by several smaller positive semidefinite matrices.
While this method offers a more economical way for computing the dense parameter $\xiid_t$, it is nevertheless inferior to the ideal-sparse approach described in the previous section. Recall in particular Remark \ref{remsparsechordal}, where we indicated how to construct a sparse parameter $\wtxisp_t$,
which could also be based on using a chordal extension $\wh G$ of $G$, but superior in quality as $\xiid_t\le \wtxisp_t$.

\begin{theorem}[\cite{AHMcCR}]\label{theoAHMCR}
Consider a positive semidefinite matrix $X\in \MS^n_+$ whose  support graph is contained in a chordal graph $\wG$, with maximal cliques $\wV_1,\ldots,\wV_\whp$. Then, there exist positive semidefinite matrices $Y_k\in \MS^{\wV_k}_+$ ($k\in [\whp]$) such that $X=\sum_{k=1}^\whp Z_k$, where $Z_k=Y_k\oplus 0_{V\setminus \wV_k,V\setminus \wV_k}\in\MS^n_+$ is obtained by padding $Y_k$ with zeros.
\end{theorem}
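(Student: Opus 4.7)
My plan is to prove this by induction on the number $\whp$ of maximal cliques of $\wG$, using the running intersection property as the key combinatorial input and the Schur complement as the key matrix-theoretic tool.

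The base case $\whp=1$ is trivial: $\wG$ is itself a clique, so we simply take $Y_1$ to be the principal submatrix $X_{\wV_1,\wV_1}$ (note $V=\wV_1$ here by maximality). For the inductive step, after reordering by RIP we may assume $\wV_\whp$ is a ``peripheral'' clique, i.e., there exists $k<\whp$ with $\wV_\whp\cap(\wV_1\cup\ldots\cup \wV_{\whp-1})\subseteq \wV_k$. Let $S:=\wV_\whp\setminus(\wV_1\cup\ldots\cup \wV_{\whp-1})$ denote the vertices that lie only in $\wV_\whp$, let $W:=\wV_\whp\setminus S$, and let $U:=V\setminus \wV_\whp$. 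Since the support of $X$ is contained in $\wG$ and there is no edge between $S$ and $U$, partitioning rows and columns by $(S,W,U)$ gives
\[
X=\begin{pmatrix} X_{SS} & X_{SW} & 0\\ X_{WS} & X_{WW} & X_{WU}\\ 0 & X_{UW} & X_{UU}\end{pmatrix}.
\]

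The idea is now to ``peel off'' a PSD matrix supported on $\wV_\whp=S\cup W$ via a Schur complement trick. Assuming first that $X_{SS}\succ 0$, I define
\[
Y_\whp:=\begin{pmatrix} X_{SS} & X_{SW}\\ X_{WS} & X_{WS}X_{SS}^{-1}X_{SW}\end{pmatrix},
\]
which is PSD (it is the Gram matrix of an appropriate linear map), and let $Z_\whp$ be the zero-padding of $Y_\whp$ to $V\times V$. A direct computation shows that $X-Z_\whp$ has a zero block on $S$ and its bottom right block on $W\cup U$ equals the full Schur complement of $X_{SS}$ in $X$, which is PSD because $X$ is. Hence $X-Z_\whp\succeq 0$, its support is contained in $V\setminus S$, and the induced chordal graph $\wG[V\setminus S]$ has maximal cliques among $\wV_1,\ldots,\wV_{\whp-1}$. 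Applying the induction hypothesis produces $Y_1,\ldots,Y_{\whp-1}$ completing the decomposition.

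The main obstacle I expect is the case when $X_{SS}$ is singular, which breaks the explicit Schur complement formula. I would resolve this by replacing $X_{SS}^{-1}$ with the Moore--Penrose pseudoinverse $X_{SS}^\dagger$ and verifying two facts: that the range of $X_{SW}$ lies in the range of $X_{SS}$ (a consequence of $X\succeq 0$ restricted to the $(S,W)$ block), so that $X_{WS}X_{SS}^\dagger X_{SW}$ is well defined and the generalized Schur complement identities carry over; and that with this modified $Y_\whp$, the matrix $X-Z_\whp$ is still PSD with support in $\wG[V\setminus S]$. The former is the standard generalized Schur complement lemma, and once this is in place the induction proceeds exactly as in the nonsingular case.
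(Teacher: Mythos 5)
Your proof is correct, and it is the standard argument for this classical result. Note that the paper itself does not prove Theorem~\ref{theoAHMCR}; it simply cites the reference \cite{AHMcCR}, and that reference (and essentially every textbook treatment) proceeds exactly as you do: order the maximal cliques by the running intersection property, peel off a ``leaf'' clique via a (generalized) Schur complement to produce one PSD summand supported on it, observe that the residual is PSD with sparsity pattern contained in the chordal graph induced on the remaining vertices, and induct. Two micro-details worth making explicit if you write this up in full: (i) $S\neq\emptyset$, which is needed for the induction to make progress --- this follows because otherwise $\wV_\whp = W\subseteq\wV_k$ by RIP, contradicting maximality and distinctness of the cliques; and (ii) the support of $X-Z_\whp$ stays inside $\wG[V\setminus S]$ because the only new potential nonzeros appear in the $W\times W$ block, and $W$ is a clique. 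Your treatment of the singular case via the Moore--Penrose pseudoinverse (range inclusion $\mathrm{Range}(X_{SW})\subseteq\mathrm{Range}(X_{SS})$ from $X\succeq 0$, plus PSD-ness of the generalized Schur complement) is also exactly right.
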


\medskip
As a final observation,  another possibility to exploit the above correlative sparsity structure would be simply  to  replace in the definition of $\xiid_t$ in program (\ref{eqxiidt}) each  condition $L(g_j[x]_{t-d_j}[x]_{t-d_j})\succeq 0$ by $\whp$ smaller matrix conditions $L({g_j}_{|\wV_k}[x(\wV_k)]_{t-d_j}[x(\wV_k)]_{t-d_j})\succeq 0$ for $k\in [\whp]$.
In other words, if $L_{|V_k}$ denotes the restriction of $L$ to the polynomials in variables indexed by $\wV_k$, then we replace the condition $L\ge 0$ on $\M(\bg)_{2t}$ by the conditions  $L_{|\wV_k}\ge 0$ on $\M(\bg_{|\wV_k})_{2t}$ for each $k\in[\whp]$. In this way we obtain another parameter, denoted by $\xi^{\text{\rm csp}}_t$, that is weaker than $\xiid_t$ and thus satisfies
$$\xi^{\text{\rm csp}}_t\le \xiid_t \le \wtxisp_t \le\xisp_t.$$
Recall $\wtxisp_t$ is the parameter from (\ref{eqwtxispt}) obtained when selecting an  extension $\wt G$ of $G$, including, for instance,  selecting a chordal extension $\wt G=\wh G$.

\section{Application to the completely positive rank} \label{sec:app-cp}

In this section we investigate how ideal-sparsity can be exploited to design bounds on the completely positive rank. 
We define the corresponding hierarchies  of lower bounds on the cp-rank and indicate their relations to other known bounds in the literature.

\subsection{Ideal-sparse lower bounds on the cp-rank}\label{seccp1}

Consider a symmetric nonnegative matrix $A\in \mathcal S^n$ and assume $A_{ii}\ne 0$ for all $i\in V$ (to avoid trivialities).
Then, its cp-rank, denoted  $\cprank(A)$, is the  smallest integer $r\in\N$ for which $A$ admits a decomposition of the form $A=\sum_{\ell=1}^ra_\ell a_\ell^T$ with $a_\ell \ge 0$ (setting $r=\infty$ if no such decomposition exists, when $A$ is not completely positive). 
Fawzi and Parrilo \cite{FP16} introduced the parameter $\taucp(A)$ from (\ref{eqtaucp}), as a convexification of the cp-rank, whose definition is repeated  for convenience:
 \begin{equation*}\label{eqtaucpb}
 \taucp(A):=\min\Big\{\lambda: {1\over \lambda }A \in \text{conv}\{xx^T: x\in \R^n_+,\ A-xx^T\succeq 0, A\ge xx^T\}\Big\}.
\end{equation*}
Clearly, we have  $\taucp(A)\le \cprank(A)$. 
 As was already indicated in Section \ref{sec:intro},
the parameter $\taucp(A)$ can be reformulated as an instance of problem (\ref{opt:dense}) with an ideal-sparsity structure inherited from the matrix $A$. For this, recall $G_A=(V=[n],E_A)$ denotes the support graph of $A$, where $E_A$ consists of all pairs  $\{i,j\}$ with $i\ne j\in V$ and $A_{ij}\ne 0$ (as in (\ref{eqsetEA})), and recall the definition of the semialgebraic set $K_A$ from (\ref{eqsetKA}).
As shown in Lemma \ref{lemtaucpGMP}, $\taucp(A)$ can be reformulated as an instance of GMP: 
$$\taucp(A)=\inf_{\mu\in \meas(\R^n)}\Big\{\int_{K_A}1 d\mu: \int_{K_A} x_ix_jd\mu = A_{ij}\ (i,j\in V), \ \supp(\mu)\subseteq K_A\Big\}.$$

\medskip
\paragraph{Dense hierarchies for cp-rank.} Based on the above reformulation of $\taucp(A)$,   for any integer $t\ge 1$,  let us define the following parameter
(as special instance of  (\ref{eqxiidt})):
  \begin{align}\xicpid_t(A)= \min\{L(1): & \ L\in \R[x]^*_{2t},\\
&   L(x_ix_j)=A_{ij}\ (i,j\in V), \label{eqcpA} \\
& L([x]_t[x]_t^T)\succeq 0, \label{eqcp0}\\
&L((\sqrt{A_{ii}}x_i-x_i^2)[x]_{t-1}[x]_{t-1}^T)\succeq 0 \ \text{ for } i\in V, \label{eqcpxi}\\
&L((A_{ij}-x_ix_j)[x]_{t-1}[x]_{t-1}^T)\succeq 0 \ \text{ for }\{i,j\}\in E_A,\label{eqcpxixj}\\
&L(x_ix_j[x]_{2t-2})=0\ \text{ for } \{i,j\}\in \olE_A, \label{eqcpid}\\
& L( {(A-xx^T)}\otimes [x]_{t-1}[x]_{t-1}^T)\succeq 0.\label{eqcpmat} 
\end{align}

We first  indicate how this parameter relates to other similar moment-based bounds considered in the literature, in particular in \cite{GdLL2019a} and \cite{GLS2022}.
Note that, due to the presence of the (ideal) constraints (\ref{eqcpid}), the constraint (\ref{eqcpxixj}) trivially holds for any pair $\{i,j\}\in \olE_A$.
If we omit the ideal constraint (\ref{eqcpid}) and impose the constraint (\ref{eqcpxixj}) for {\em all} pairs $\{i,j\}$ with $i\ne j\in V$,  then we obtain a parameter  investigated in \cite{GLS2022},  denoted here as $\xicp_{t,(2022)}(A)$.
The parameter $\xicp_{t,(2022)}(A)$ strengthens an earlier parameter $\xicp_{t,(2019)}(A)$ introduced in \cite{GdLL2019a}, whose definition follows by replacing in the definition of $\xicp_{t,(2022)}(A)$ the constraint (\ref{eqcpmat}) by the weaker constraint
\begin{align}\label{eqcpmatll}
L((xx^T)^{\otimes \ell})\preceq A^{\otimes \ell}\ \text{ for } \ell \in [t].
\end{align}
So, for any $t\ge 1$,  we have 
$$\xicp_{t,(2019)}(A)\le \xicp_{t,(2022)}(A)\le \xicpid_t(A).$$ 
Since the  bounds $\xicp_{t,(2019)}(A)$ were shown to converge asymptotically to $\taucp(A)$ in \cite{GdLL2019a}, the same holds for the bounds  $\xicpid_t(A)$.
Note that the convergence of the latter bounds  also  follows directly from Theorem \ref{theoconvGMP}. 

As mentioned in \cite{GdLL2019a}, there are more constraints that can be added to the above program and still lead to a lower bound on the cp-rank (in fact on $\taucp(A)$). In particular,  exploiting the fact that the variables $x_i$ should be nonnegative, one may add the constraints
\begin{align}
 L([x]_{2t}) \ge 0,\label{cpdagger1}  \\
 L((\sqrt{A_{ii}}x_i-x_i^2)[x]_{2t-2})\ge 0 \text{ for } i\in V, \label{cpdagger2} \\ 
 L(A_{ij}-x_ix_j)[x]_{2t-2}) \geq 0 \text{ for }\{i,j\}\in E_A.\label{cpdagger3} 
 \end{align}
One may also add other localizing constraints, such as
\begin{align}\label{eqcpxx}
L(x_ix_j[x]_{t-1}[x]_{t-1}^T) \succeq 0 \text{ for }\{i,j\}\in E_A.
\end{align}
Note that the constraints (\ref{eqcpxx}) are redundant at the smallest level $t=1$. Note also that one could add a similar constraint replacing $x_ix_j$ by any monomial.
We  use  the notation $\xicpid_{t,\dag}(A)$ to denote the parameter obtained by adding
(\ref{cpdagger3}) to the program defining $\xicpid_t(A)$. 
Define analogously $\xicp_{t,(2019),\dag}(A)$ by adding (\ref{cpdagger3}) to $\xicp_{t,(2019)}(A)$, 
so that we have
\begin{align*}\label{eqrelcpdag}
\xicp_{t,(2019),\dag}(A)\le \xicpid_{t,\dag}(A).
\end{align*}
As we will see in relation (\ref{eqrelcp}) below, the bound $\xicp_{2,(2019),\dag}(A)$ is at least as good as $\rank (A)$, an obvious lower bound on $\cprank(A)$.
Let $\xicpid_{t,\ddagger}(A)$ denote the strengthening of $\xicpid_{t,\dagger}(A)$ by adding constraints (\ref{cpdagger1}), (\ref{cpdagger2}), and (\ref{eqcpxx}), so that we have
$\xicpid_t(A)\le \xicpid_{t,\dag}(A)\le \xicpid_{t,\ddagger}(A)$.

\medskip
\paragraph{Ideal-sparse hierarchies for cp-rank.} 
We now consider the ideal-sparse bounds for the cp-rank,  which further exploit the ideal-sparsity pattern of $A$. For this, let $V_1,\ldots,V_p$ denote the maximal cliques of the graph $G_A$ and, for $t\ge 1$, define the following parameter (as special instance of (\ref{eqxispt})):
  \begin{align}
  \xicpsp_t(A)= \min\Big\{\sum_{k=1}^pL_k(1): & \ L_k\in \R[x(V_k)]^*_{2t}\ (k\in [p]),\\
&   \sum_{k\in [p]: i,j\in V_k}  L_k(x_ix_j)=A_{ij}\ (i,j\in V), \label{eqcpAsp} \\
& L_k([x(V_k)]_t[x(V_k)]_t^T)\succeq 0 \ (k\in [p]), \label{eqcp0sp}\\
&L_k((\sqrt{A_{ii}}x_i-x_i^2)[x(V_k)]_{t-1}[x(V_k)]_{t-1}^T)\succeq 0 \ \text{ for } i\in V_k,\ k\in [p], \label{eqcpxisp}\\
&L_k((A_{ij}-x_ix_j)[x(V_k)]_{t-1}[x(V_k)]_{t-1}^T)\succeq 0 \ \text{ for } i\ne j\in V_k,\ k\in [p], \label{eqcpxixjsp}\\
& L_k( {(A- x x^T)} \otimes   [x(V_k)]_{t-1}[x(V_k)]_{t-1}^T)\succeq 0,\ \text{ for } k\in [p]. \label{eqcpmatsp} 
\end{align}  
Here, in equation (\ref{eqcpmatsp}), it is understood that, for a given $k\in [p]$,  in the matrix $A-xx^T$  one sets the entries of $x$  indexed by $V\setminus V_k$ to zero.
As a direct application of Theorem \ref{lem:dens<spar}, we have
$$
\xicp_t(A)\le \xicpsp_t(A)\le \taucp(A) \text{ for any } t\ge 1.
$$
One may also define the sparse analogs of the constraints (\ref{cpdagger1}), (\ref{cpdagger2}), (\ref{cpdagger3}), and (\ref{eqcpxx}):
\begin{align}
 L_k([x(V_k)]_{2t}) \ge 0 \ \text{ for } k\in [p] ,\label{cpdaggersp1} \\
  L_k((\sqrt{A_{ii}}x_i-x_i^2)[x(V_k)]_{2t-2})\ge 0\ \text{ for } i\in V_k,\ k\in [p],\label{cpdaggersp2}\\ 
 L_k((A_{ij}-x_ix_j)[x(V_k)]_{2t-2}) \geq 0\ \text{ for }\{i,j\}\subseteq V_k,\ k\in [p],\label{cpdaggersp3}\\ 
 L_k(x_ix_j[x(V_k)]_{t-1}[x(V_k)]_{t-1}^T) \succeq 0\ \text{ for }i\ne j\in V_k,\ k\in [p]. \label{eqcpxxsp} 
\end{align}
Then, define $\xicpsp_{t,\dag}(A)$ by adding constraint (\ref{cpdaggersp3}) to $\xicpsp_t(A)$, and
$\xicpsp_{t,\ddagger}(A)$  
by adding the constraints (\ref{cpdaggersp1}), (\ref{cpdaggersp2}) and (\ref{eqcpxxsp}) to
$\xicpsp_{t,\dag}(A)$, so that $\xicpsp_t(A)\le \xicpsp_{t,\dag}(A)\le \xicpsp_{t,\ddagger}(A)$.

\paragraph{Weak ideal-sparse hierarchies for cp-rank.} 
Observe that, if, in equation (\ref{eqcpmatsp}), we replace the matrix $A-xx^T$ by its principal submatrix indexed by $V_k$, then  one also gets a lower bound on $\taucp(A)$, possibly weaker than $\xicpsp_t(A)$, but potentially easier to compute. 
Let $\xicpwsp_t(A)$ denote the parameter obtained  by replacing the condition (\ref{eqcpmatsp}) in the definition of $\xicpsp_t(A)$  by the following (weaker) constraint
\begin{align}\label{eqcpmatwsp}
L_k( {(A[V_k]- x(V_k)x(V_k)^T)} \otimes   [x(V_k)]_{t-1}[x(V_k)]_{t-1}^T)\succeq 0\ \text{ for } k\in [p].
\end{align}
Then we have 
 $$\xicpwsp_t(A)\le \xicpsp_t(A).$$
Since we have weakened some conditions of the ideal-sparse hierarchy, the weak ideal-sparse hierarchy $\xicpwsp_t(A)$ is no longer guaranteed to be at least as strong as  the dense hierarchy  $\xicp_t(A)$. 
This is substantiated by our numerical experiments, where we frequently observe $\xicpwsp_t(A) < \xicp_t(A)$ for randomly generated matrices $A$; see Section \ref{randgen} for how we generate these matrices and see (\ref{eqA}) for a concrete instance of such matrix. 
On the other hand, in all of the high cp-rank matrices $A$ from the literature that we consider in Section \ref{sec:sparsecp}, it does hold that  $\xicp_t(A) \leq \xicpwsp_t(A)$. 
This relation also holds for several other cp-rank matrices from the literature we have considered but did not present in this paper. 
It  
seems  that the delineating factor might be that our randomly generated matrices tend to have cp-rank close to the usual matrix rank (i.e., $\cprank(A) - \rank(A) \leq 1$), while, in contrast, the matrices considered in the literature have a cp-rank often much higher (e.g., up to 27 for Example ex4 in (\ref{figex2})) than the rank.

\subsection{Links to combinatorial  lower bounds on the cp-rank}\label{seccp2}

We indicate here some links to other known lower bounds on the cp-rank.
Clearly the  rank is a lower bound:  $$\cprank(A)\ge \rank(A).$$
A  combinatorial lower bound  arises naturally from the edge clique-cover number of the support graph $G_A$.

Given a graph $G=(V,E)$, its {\em edge clique-cover number}, denoted $c(G)$ (following \cite{FP16}),  is defined as the smallest number of (maximal) cliques in $G$
whose union covers every edge of $G$. 
This parameter is NP-hard to compute \cite{GJ}.  
Clearly, $\ecc(G)=|E|$ if $G$ is a triangle-free graph (i.e., $\omega(G)=2$, where $\omega(G)$ denotes the maximum cardinality of a clique in $G$). 
As observed in \cite{FP16},  the edge clique-cover parameter gives a lower bound on the cp-rank:
$$\cprank(A)\ge \ecc(G_A). $$ 
Indeed, if $A=\sum_{\ell=1}^r a_\ell a_\ell^T$ with $a_\ell\ge 0$ and $r=\cprank(A)$, then the supports of $a_1,\ldots, a_r$ are (not necessarily distinct) cliques that provide an edge clique-cover of $G_A$ by at most $r$ cliques.

In \cite{FP16} a semidefinite parameter $\taucpsos(A)$ is introduced, which is shown
to be at least as good as $\rank (A)$ and as $c_{\text{\rm frac}}(G_A)$, the {\em fractional edge clique-cover number}, i.e., the natural linear relaxation of $c(G_A)$ defined by
\begin{equation}\label{eqcfrac}c_{\text{\rm frac }}(G_A)=\min\Big\{\sum_{k=1}^p x_k: \sum_{k: \{i,j\}\subseteq V_k} x_k\ge 1\ \text{ for } \{i,j\}\in E_A\Big\}.
\end{equation}
So, we have $c(G_A)\ge c_{\text{\rm frac }}(G_A)$ and 
\begin{align*}
\taucp(A)\ge \taucpsos(A) \ge \max\{\rank(A), c_{\text{\rm frac}}(G_A)\}. 
\end{align*}
 In \cite{GdLL2019a} it is shown
  that the bound $\xicp_{2,(2019),\dag}(A)$ is at least as strong as $\taucpsos(A)$. Indeed,   the proof for the relevant result (Proposition 7 in \cite{GdLL2019a}) only uses the relation $L((A_{ij}-x_ix_j)x_ix_j)\ge 0$ from (\ref{cpdagger3}) and the relation $L((xx^T)^{\otimes 2}\preceq A^{\otimes 2}$ in (\ref{eqcpmatll}).
 Hence, we have  the chain of inequalities
\begin{align}\label{eqrelcp}
\taucp(A)\ge \xicpsp_{2,\dag}(A) \ge \xicpid_{2,\dag}(A)\ge \xicp_{2,(2019),\dag}(A) \ge \taucpsos(A) \ge \max\{\rank(A), c_{\text{\rm frac}}(G_A)\}. 
\end{align}
 
As we now observe,  the (weak) ideal-sparse bound $\xicpwsp_1(A)$ of the first level $t=1$ is at least as good as the parameter $c_{\text{\rm frac }}(G_A)$.

\begin{lemma}\label{lemcplbecc}
If $A\in \mathcal S^n$ is nonnegative  with support graph $G_A$, then 
$\xicpwsp_1(A)\ge c_{\text{\rm frac }}(G_A).$  
\end{lemma}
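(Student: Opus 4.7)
The plan is to exhibit, starting from any feasible solution $(L_1, \ldots, L_p)$ of $\xicpwsp_1(A)$, a feasible solution of the linear program defining $c_{\text{\rm frac}}(G_A)$ whose value is at most $\sum_{k=1}^p L_k(1)$. The natural candidate is simply $\lambda_k := L_k(1)$ for $k \in [p]$, and I expect the argument to reduce to checking the two types of constraints in the definition (\ref{eqcfrac}).

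First I would verify nonnegativity: since the moment matrix condition (\ref{eqcp0sp}) specialized to $t=1$ forces $L_k([x(V_k)]_1 [x(V_k)]_1^T)\succeq 0$, its $(1,1)$-entry $L_k(1)$ is nonnegative, so $\lambda_k \ge 0$. Next I would verify the covering constraints. Fix an edge $\{i,j\} \in E_A$, so that $A_{ij} > 0$. The moment equation (\ref{eqcpAsp}) gives
\begin{equation*}
A_{ij} \;=\; \sum_{k\,:\, i,j\in V_k} L_k(x_ix_j).
\end{equation*}
At level $t=1$ the localizing constraint (\ref{eqcpxixjsp}) degenerates to the scalar inequality $L_k(A_{ij} - x_i x_j) \ge 0$ for each $k$ with $i,j \in V_k$, i.e.\ $L_k(x_ix_j) \le A_{ij}\, L_k(1) = A_{ij}\,\lambda_k$. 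Summing and dividing by $A_{ij}>0$ yields
\begin{equation*}
1 \;=\; \frac{1}{A_{ij}} \sum_{k\,:\, i,j\in V_k} L_k(x_ix_j) \;\le\; \sum_{k\,:\, i,j\in V_k} \lambda_k,
\end{equation*}
which is exactly the constraint defining $c_{\text{\rm frac}}(G_A)$.

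Hence $(\lambda_1,\ldots,\lambda_p)$ is feasible for the linear program (\ref{eqcfrac}), and therefore
\begin{equation*}
c_{\text{\rm frac}}(G_A) \;\le\; \sum_{k=1}^p \lambda_k \;=\; \sum_{k=1}^p L_k(1),
\end{equation*}
and minimizing over feasible solutions of $\xicpwsp_1(A)$ gives the desired inequality. The only subtlety is making sure one uses the correct level-$t=1$ specializations of the constraints; no use is made of the weakened PSD block (\ref{eqcpmatwsp}), so the same proof would also work for $\xicpsp_1(A)$ and even for $\xicp_1(A)$ (after observing that at $t=1$ the constraint $A_{ij}-x_ix_j\ge 0$ needs only to be posted for $\{i,j\}\in E_A$, since for $\{i,j\}\in\olE_A$ it is implied by the ideal constraint). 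I do not expect any real obstacle here; the argument is a direct translation of the moment-level data into a fractional covering solution.
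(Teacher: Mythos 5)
Your proof is correct and follows essentially the same route as the paper's: both take the vector $(L_k(1))_{k=1}^p$ as the candidate feasible point for the LP (\ref{eqcfrac}), derive $A_{ij}L_k(1)\ge L_k(x_ix_j)$ from the scalar specialization of (\ref{eqcpxixjsp}) at $t=1$, sum over the cliques containing $\{i,j\}$, and invoke (\ref{eqcpAsp}) together with $A_{ij}>0$ to obtain the covering constraint. Your additional observations (explicit nonnegativity of $L_k(1)$ via (\ref{eqcp0sp}), and that the weakened constraint (\ref{eqcpmatwsp}) is never used) are accurate but do not change the argument.
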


\begin{proof}
Let $(L_1,\ldots,L_p)$ be an optimal solution  for the parameter $\xicpwsp_1(A)$. Using (\ref{eqcpxixjsp}), we have
$$L_k(A_{ij}-x_ix_j)\ge 0 \ \text{ for all } i\ne j \text{ with } \{i,j\}\subseteq V_k \text{ and } k\in [p],$$
which gives $A_{ij}L_k(1)\ge L_k(x_ix_j)$. Summing over $k$, we get
$$A_{ij}\sum_{k\in [p]: \{i,j\}\subseteq V_k} L_k(1) \ge \sum_{k\in [p]: \{i,j\}\subseteq V_k}L_k(x_ix_j)= A_{ij},
$$
where we use (\ref{eqcpAsp}) for the last equality. As $A_{ij}>0$, this gives 
$\sum_{k: \{i,j\}\subseteq V_k}L_k(1)\ge 1$ for every edge $\{i,j\}\in E_A$.
Hence, the vector $x=(L_k(1))_{k=1}^p \in \R^p_+$ is feasible for program (\ref{eqcfrac}), which implies the inequality $\sum_{k=1}^pL_k(1)\ge c_{\text{\rm frac}}(G_A)$, as desired.
\end{proof}

We now give a class of cp-matrices that exhibit a large separation between the dense and ideal-sparse bounds at level $t=1$: these matrices have size $n=2m$ and
$\cprank(A)=\xicpwsp_1(A)=m^2\ge  m+1 > \xicpid_1(A)$.

\begin{example}\label{excp}
For $n=2m$ consider the matrix 
$$A=\left(\begin{matrix} (m+1)I_m & J_m\cr J_m & (m+1)I_m\end{matrix}\right)\in\mathcal S^n,$$ 
where $I_m$ is the identity matrix and $J_m$ the all-ones matrix.
Then, $A$ is a cp-matrix (because it is nonnegative and diagonally dominant). 
Its cp-rank is $\cprank(A)=|E_A|=m^2$ (because 
 its support graph $G_A$ is the complete bipartite graph $K_{m,m}$ (thus, connected, triangle-free and not a tree),  using a result of \cite{DJL94}, also mentioned below). 
 Clearly, we have $c(G_A)=c_{\text{\rm frac}}(G_A)=m^2$.
 Hence, using Lemma \ref{lemcplbecc}, we obtain $\xicpsp_1(A)=\xicpwsp_1(A)= m^2=\cprank(A)$. We claim  $\xicpid_1(A)<m+1$, which shows a large separation between the dense and ideal-sparse bounds of level $t=1$. 

For this, observe that $\xicpid_1(A)$ can be reformulated as 
\begin{align*}\label{eqsol}
\xicpid_1(A)=\min\{L(1): L\in \R[x]_2^*, L(1)\ge 1,\ L(x_i)\ge \sqrt{A_{ii}}\ (i\in [n]), \ L(xx^T)=A,\ L([x]_1[x]_1^T)\succeq 0\}.
\end{align*}
Consider the linear functional $L\in \R[x]_2^*$ defined by 
$L(xx^T)=A$, $L(x_i)=\sqrt{m+1}$ for $i\in [n]$ and $L(1)= {2m(m+1)\over 2m+1}$. We show  that $L$ is feasible for the above program, which implies $\xicpid_1(A)\le L(1)<m+1$.  For this, it suffices to show that $L([x]_1[x]_1^T)\succeq 0$. By taking the Schur complement with respect to the upper left corner, this boils down to checking that
$ L(1) A - (m+1)J_m\succeq 0$. As the all-ones vector is an eigenvector of $A$ (with eigenvalue $2m+1$), it is also an eigenvector of $ L(1) A - (m+1)J_m$ with corresponding eigenvalue $L(1) (2m+1)- 2m(m+1)=0$. 
The eigenvalues of the matrix $L(1)A-(m+1)J_m$ for its eigenvectors orthogonal to the all-ones vector are eigenvalues of $A$, and thus they are  nonnegative since $A$ is positive semidefinite. This shows that  $ L(1) A - (m+1)J_m\succeq 0$ and the proof is complete.
\end{example}

We conclude with some observations about known upper bounds on the cp-rank.

General upper bounds on the cp-rank are  $\cprank(A)\le n$ if $n\le 4$, $\cprank (A)\le {n+1\choose 2}- 4$ if $n\ge 5$ \cite{SMBJS2013}, and $\cprank(A)\le {r+1\choose 2}-1$ if $r=\rank(A)\ge 2$ \cite{BB2003}.

It is known that $\ecc(G)\le n^2/4$ \cite{EGP}. In analogy, it has been a long standing conjecture by Drew et al. \cite{DJL94} that the cp-rank of an $n\times n$ completely positive matrix is at most $n^2/4$. This conjecture, however, was disproved in \cite{BSU2014,BSU2015} for any $n\ge 7$.  In particular, it is shown in \cite{BSU2015} that the maximum cp-rank of an $n\times n$ cp-matrix is of the order $n^2/2 + O(n^{3/2})$.

If  the support graph $G_A$ is triangle-free, then  $|E_A|\le \cprank(A)\le \max\{n, |E_A|\}$; moreover, if $G_A$ is  connected, triangle-free and not a tree, then  $\cprank(A)=|E_A|$  \cite{DJL94}.  Hence,  $n-1=|E_A|\le \cprank(A)\le n$ if $G_A$ is a tree, with $\cprank(A)=n$ if $A$ is nonsingular. 
By Lemma \ref{lemcplbecc}, we know that $\xicpwsp_1(A)\ge |E_A|$ if $G_A$ is triangle-free. 
Hence, the bound $\xicpwsp_1(A)$ gives the exact value of the cp-rank when $G_A$ is connected, triangle-free and not a tree. 
On the other hand, if $G_A$ is a tree and $A$ is nonsingular, then 
the bound $\xicpid_{2,\dag}(A)$ gives the exact value (equal to $n$) of the cp-rank (since it is at least $\taucpsos(A)\ge \rank (A)$ by relation (\ref{eqrelcp})).

\subsection{Numerical results for the completely positive rank}\label{sec:cpnumerical}

In this section, we explore the behaviour of the various bounds for the completely positive rank on three classes of examples. Our objective is to illustrate the superiority of  the ideal-sparse hierarchies compared  to the  dense ones. We examine both the quality of the bounds as well as computation times.

The first class we consider consists  of randomly generated sparse cp-matrices.
We will give the exact construction below.
In all numerical examples we considered for these matrices, 
the bounds obtained for $\xicpid_t(A)$ and $\xicpsp_t(A)$ were always at most $\rank(A)+2$. 
So we do not list the numerical bounds for these examples as there does not seem to be much  insight gained from them. 
However, random examples give us a way to compare the computation times amongst different hierarchies and across various matrix sizes, non-zero densities, and levels. 
In what follows the {\em non-zero density} of  $A\in \mathcal S^n$, denoted $\nzd(A)$,  is defined  as the proportion of non-zero entries above the main diagonal, i.e., $\nzd(A)=|E_A|/{n\choose 2}$.
Hence, a diagonal matrix has nzd=0, and a dense matrix has nzd=1. 

The second class contains  examples  from the literature, whose  cp-rank is known from theory.
However, recall  the moment hierarchies provide lower  bounds on $\taucp$, whose value is often unknown and could be strictly less than the  cp-rank. Regardless, these examples give an interesting testbed to evaluate the quality of the new bounds.

The third class of examples consists of  doubly nonnegative matrices, which are known to not be completely positive. In running these examples, the hope is to obtain an infeasibility certificate from the solver. This then numerically certifies that the matrix is not completely positive. In this context one hierarchy is said to perform better than another one if it returns the infeasibility certificate at a lower level or using less run time. 

The size of the matrices involved in the semidefinite programs  grows quickly with the level $t$ in the hierarchy (roughly, as $\binom{n+t}{t}$), so these problems become quickly too big for the solver (in particular, due to memory storage). 
We will consider matrices up to size $n=12$ for the dense and sparse hierarchies  at level $t=2$.
At level $t=3$ and for matrices of size $n=12$, we can only compute bounds for the weak sparse hierarchy.

All computations shown were run on a personal computer running  Windows 11 Home 64-bit with an 11th Gen Intel(R) Core(TM) i7-11800H @ 2.30GHz Processor and 16GB of RAM. The software we use was custom coded in Julia \cite{Julia} utilizing the JuMP \cite{JuMP} package for problem formulation, and MOSEK \cite{Mosek} as the semidefinite programming solver.\footnote{See the code repository {https://github.com/JAndriesJ/ju-cp-rank}{ju-cp-rank}
 for details.}

\begin{figure}[ht]
	\centering
	\text{
	 Computation times vs. matrix size and non-zero density, level $ t=2$}
	\includegraphics[scale=0.52]{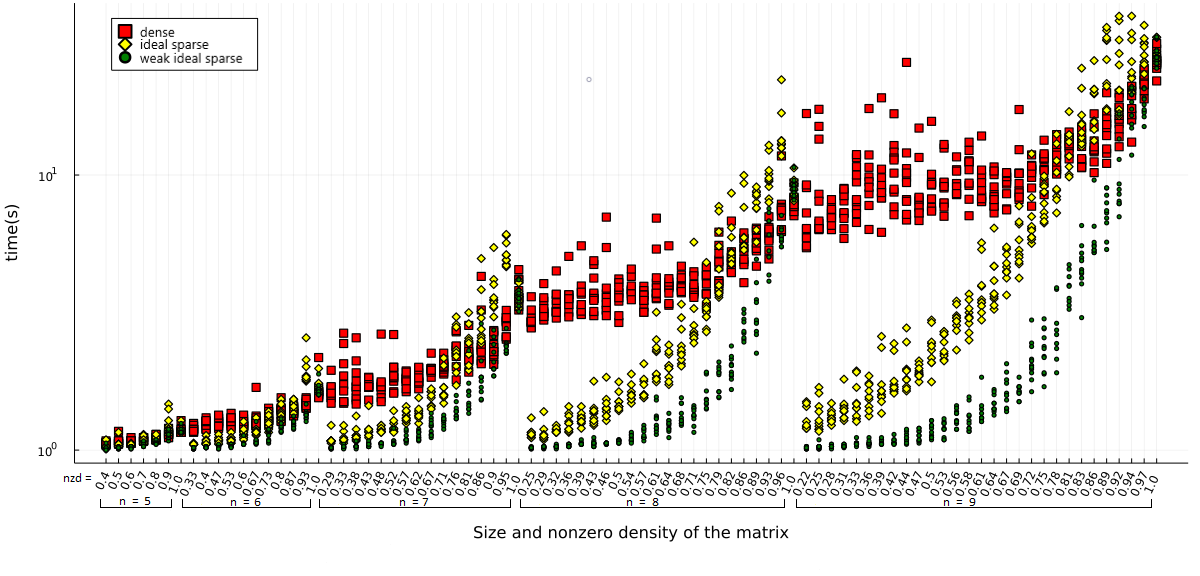}
	\caption{ Scatter plot of the computation times (in seconds) for the three hierarchies $\xicpid_{2,\dag}$ (indicated by a red square), $\xicpsp_{2,\dag}$ (indicated by a yellow losange), $\xicpwsp_{2,\dag}$ (indicated by a green circle) against matrix size and non-zero density for 850 random matrices, generated using the above described procedure. The matrices are arranged in ascending size ($n=5,6,7,8,9$) and then ascending non-zero density, ranging from the minimal density needed to have a connected support graph up to a fully dense matrix ($\nzd=1$).} 
	\label{scat1}
\end{figure}

\begin{figure}[ht]
	\centering
	\text{Computation times vs. matrix size and non-zero density, level $ t=3$}
        \includegraphics[scale=0.37]{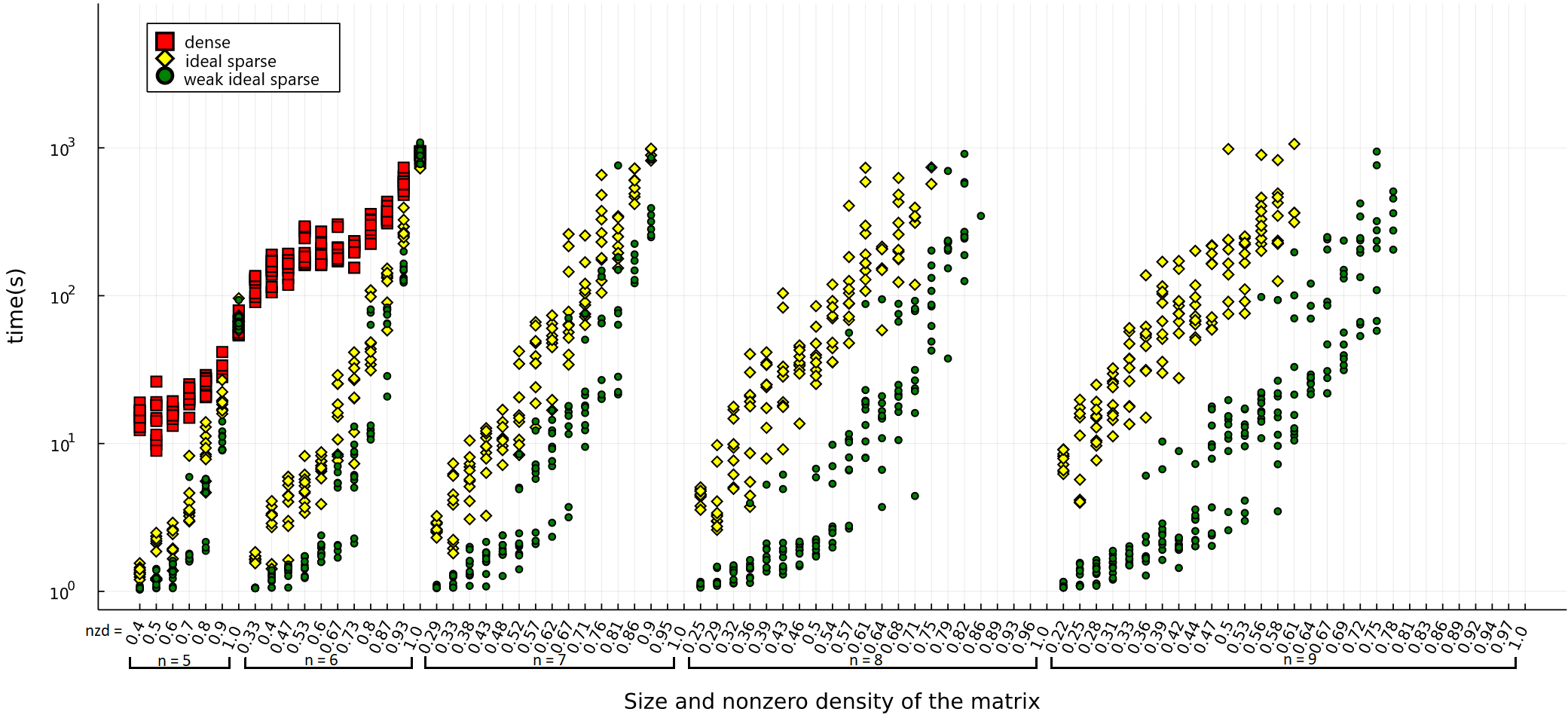}
	\caption{This is a similar plot to Figure \ref{scat1} but now for level t=3 of each of the hierarchies. By omitting markers we indicate that the corresponding computations either exceeded memory constraints or took longer than $10^3$ seconds.} \label{scat2}
\end{figure}

\subsubsection{Randomly generated sparse cp-matrices} \label{randgen}
We first describe how we construct random sparse cp-matrices.
Given  integers $n \in \N$ and  $n-1 \leq m \le \binom{n}{2}$, we create a symmetric $n\times n$ binary matrix $M$ with exactly $m$ ones above the diagonal, whose positions are selected uniformly at random. 
Let $G$ be the graph with $M$ as adjacency matrix.  
We only keep the instances where $G$ is a connected  graph.
We enumerate the maximal cliques $V_1,\ldots,V_p$ of $G$ (using, e.g., the  Bron-Kerbosch algorithm \cite{BK73}).
Then, we select a subset of maximal cliques $V_{q_1},...,V_{q_l}$   whose union covers every edge of $G$
(e.g., using a greedy algorithm). 
For each $k\in [l]$, generate $m_k\ge 1$  vectors $(a^{(k,i)})_{i \in [m_k]} \subseteq \R^n_+$ with uniformly random entries following ${\cal{U}}[0,1]$ and supported on  $V_{q_k}$. We will choose $m_k=2$ by default.
Then, consider the matrix  $\sum_{k\in[l]} \sum_{i \in [m_k]}  a^{(k,i)} (a^{(k,i)})^T$,  scale it so that all diagonal entries are equal to 1 and call $A$ the resulting matrix. By construction, $A$ is completely positive with connected support $G_A=G$, and non-zero density $\nzd= m/{n\choose 2}$.

We generate such random examples for varying matrix size ($n=5,6,7,8,9$) and incrementing the non-zero density $\nzd$ in ascending order. In order to not include examples with disconnected graphs we need $\nzd \geq (n-1)/{n\choose 2}$. To account for different graph configurations 
with the same non-zero density we generate $10$ examples per matrix size and nzd value. For all of them we compute the dense and (weak) sparse bounds of level $t=2$ and $t=3$. Here, we are not so much interested in the numerical bounds, but rather in their computation times.
This numerical experiment indeed permits to show the differences in computation time  between the ideal-sparse and dense hierarchies. It turns out that the computation times for the parameters  $\xicpid_t$,  $\xicpid_{t,\dag}$, and $\xicpid_{t,\ddagger}$ are all comparable at level $t=2,3$, likewise for the ideal-sparse analogs. For this reason, we only plot the  results for the ``$\dag$" variant, i.e., for the parameters $\xicpid_{t,\dag}$, $\xicpsp_{t,\dag}$, $\xicpwsp_{t,\dag}$.  The results are shown in Figure~\ref{scat1} (for $t=2$) and in Figure~\ref{scat2} (for $t=3$).

\medskip
We can make the following observations about the results in Figure \ref{scat1}.
As expected, the ideal-sparse hierarchy is faster to compute than the dense hierarchy for matrices with  non-zero density $\nzd \le 0.8$.
The computation of the weak ideal-sparse hierarchy is even faster. 
Moreover, the speed-up increases with the matrix size and the level of the hierarchy as can be seen across Figures \ref{scat1} and \ref{scat2}. At level $t=3$, some hierarchies can no longer be computed for certain matrix sizes and non-zero densities. This is particularly evident in the case of the dense hierarchy for matrices of size 7 and larger. The ideal-sparse hierarchies can be computed up to size 9 depending on the non-zero density. We show only the examples that we could compute in less than $10^3$ seconds. The parameters that either took longer that $10^3$ seconds or exceeded memory constraints can be inferred by the omission of their respective markers in Figure \ref{scat2}. 

\medskip
We also make an observation regarding how the values of the dense and weak-ideal sparse bounds compare for these random matrices. 
As observed earlier, 
the weak ideal-sparse hierarchy $\xicpwsp_t(A)$ is no longer guaranteed to be at least as strong as the dense hierarchy  $\xicp_t(A)$. 
Indeed, in our numerical experiments, we frequently observe the strict inequality $\xicpwsp_t(A) < \xicp_t(A)$ for randomly generated matrices $A$.
For example, the matrix (with entries rounded for presentation)
\begin{equation}\label{eqA}
A = \left( {\begin{array}{ccccc}
    1.0    &0.578  &0.0    &0.0    &0.225 \\
 	0.578  &1.0    &0.0    &0.0    &0.0    \\
 	0.0    &0.0    &1.0    &0.0    &0.656  \\
 	0.0    &0.0    &0.0    &1.0    &0.526  \\
 	0.225  &0.0    &0.656  &0.526  &1.0
  \end{array} } \right)
\end{equation}
has the following parameters at order $t= 2$:
$$
\Big(\xicpwsp_2(A)=4\Big)  < \Big(\xicp_2(A)=5\Big) \leq \Big(\xicpsp_2(A)=5\Big) \leq \Big(\cprank(A)=5\Big).
$$

\subsubsection{Selected sparse cp-matrices}\label{sec:sparsecp}

Here, we compute  the dense and (weak) ideal-sparse parameters 
for a few selected cp-matrices taken from the literature.
We first briefly  discuss the four example matrices we will consider, denoted ex1, ex2, ex3, ex4, and shown in relations (\ref{figex1}) and (\ref{figex2}) below.

\begin{equation}\label{figex1}
  \text{ex1} =
  \left( {\begin{array}{ccccc}
    3& 2&  0& 0& 1 \\
    2& 5&  6& 0& 0\\
    0& 6& 14& 4& 0\\
    0& 0&  4& 9& 1\\
    1& 0&  0& 1& 2
  \end{array} } \right),
\
  \text{ex3} =
  \left({\begin{array}{ccccccccccc}
781& 0& 72& 36& 228& 320& 240& 228& 36& 96& 0 \\
0& 845& 0& 96& 36& 228& 320& 320& 228& 36& 96\\
72& 0& 827& 0& 72& 36& 198& 320& 320& 198& 36\\
36& 96& 0& 845& 0& 96& 36& 228& 320& 320& 228\\
228& 36& 72& 0& 781& 0& 96& 36& 228& 240& 320\\
320& 228& 36& 96& 0& 845& 0& 96& 36& 228& 320\\
240& 320& 198& 36& 96& 0& 745& 0& 96& 36& 228\\
228& 320& 320& 228& 36& 96& 0& 845& 0& 96& 36\\
36& 228& 320& 320& 228& 36& 96& 0& 845& 0& 96\\
96& 36& 198& 320& 240& 228& 36& 96& 0& 745& 0\\
0& 96& 36& 228& 320& 320& 228& 36& 96& 0& 845
 \end{array} } \right),
\end{equation}

\begin{equation}\label{figex2}
  \text{ex2} =
  \left( {\begin{array}{ccccc}
   2& 0& 0& 1& 1 \\
   0& 2& 0& 1& 1 \\
   0& 0& 2& 1& 1 \\
   1& 1& 1& 3& 0 \\
   1& 1& 1& 0& 3 \\
  \end{array} } \right),
\
\text{ex4} =
  \left( {\begin{array}{cccccccccccc}  
91&  0&  0&  0& 19& 24& 24& 24& 19& 24& 24& 24\\
0& 42&  0&  0& 24&  6&  6&  6& 24&  6&  6&  6\\
0&  0& 42&  0& 24&  6&  6&  6& 24&  6&  6&  6\\
0&  0&  0& 42& 24&  6&  6&  6& 24&  6&  6&  6\\
19& 24& 24& 24& 91&  0&  0&  0& 19& 24& 24& 24\\
24&  6&  6&  6&  0& 42&  0&  0& 24&  6&  6&  6\\
24&  6&  6&  6&  0&  0& 42&  0& 24&  6&  6&  6\\
24&  6&  6&  6&  0&  0&  0& 42& 24&  6&  6&  6\\
19& 24& 24& 24& 19& 24& 24& 24& 91&  0&  0&  0\\
24&  6&  6&  6& 24&  6&  6&  6&  0& 42&  0&  0\\
24&  6&  6&  6& 24&  6&  6&  6&  0&  0& 42&  0\\
24&  6&  6&  6& 24&  6&  6&  6&  0&  0&  0& 42  
   \end{array} } \right).
\end{equation}

The matrix ex1 (from \cite{98B}) is supported on the 5-cycle $C_5$ and
the matrix ex2 (from  \cite{98XX}) is  supported on the bipartite graph $K_{3,2}$.
In both cases, 
we have $\xicpsp_1(A) = \cprank(A) = |E_A|$ (combining Lemma \ref{lemcplbecc}  and the results of \cite{DJL94} mentioned earlier at the end of Section \ref{seccp2}).
The matrices ex3 and ex4 were constructed, respectively, in 
\cite{BSU2014,BSU2015} as examples of matrices having a large cp-rank
exceeding the value $n^2/4$ (thus refuting the conjecture by Drew et al. \cite{DJL94}).
The matrix ex3 is supported on $\overline {C_{11}}$, the complement of an 11-cycle, and matrix ex4 is supported on the complete tripartite graph $K_{4,4,4}$.
One can verify that the edge clique-cover number  is equal to 8 for $\overline {C_{11}}$ and  to 16 for $K_{4,4,4}$.

\medskip
The numerical results for these four examples are presented in Table \ref{tab:table1}, where we also show  other parameters for the matrix (size $n$, rank $r$, cp-rank $r_{\text{\rm cp}}$) and its support graph  (number $p$ of maximal cliques, edge clique-cover number $c$).
 Here are some comments about  Table  \ref{tab:table1}.
 
 The results confirm the results in 
 Lemma~\ref{lemcplbecc}: the ideal-sparse bound of level $t=1$ is equal to the number of edges for ex1 and ex2 (and matches the cp-rank); moreover it gives a strong improvement on the dense bound of level 1. The bounds of level $t=2$ all exceed the rank of the matrix (as expected in view of (\ref{eqrelcp})). 
 At level $t=3$, only the weak ideal-sparse bound can be computed for the matrices ex3 and ex4.

\begin{table}[ht]
\caption{Dense and {ideal-}sparse bounds for selected sparse cp-matrices}
  \begin{center}
    \label{tab:table1}
    \begin{tabular}{cl|l|l|l||c|c|c|c||r|r|r|r} 
    \text{$A$}&$n$ &$p$ &$c$ &$r$ &\multicolumn{3}{c|}{bounds}&$r_{\mathrm{cp}}$&   \multicolumn{3}{c}{times (seconds)}
     \\
      &&&&&$\xicpid_1$&$\xicpsp_1$&$\xicpwsp_1$&& {dense} &{ideal-sparse}&wisp \\ \hline
ex1&	5&	5&	5&5	&2.71&5&5&5			&$<1$&$<1$&$<1$\\
ex2 &	5&	6&	6&4	&3&6&6&6&$<1$	 		&$<1$&$<1$\\
ex3&	11&	22&	$8$&11	&4.24&8.53&8.53&32&$<1$&$<1$&$<1$\\
ex4 &	12&	64&	16&10	&4.85&29.66&29.63&37&$<1$&$<1$&$<1$\\\hline
&&&&&$\xicpid_{2,\ddagger}$&$\xicpsp_{2,\ddagger}$&$\xicpwsp_{2,\ddagger}$&&&& \\ \hline
ex1&5&&&5&5&5&5&5&$<1$&$<1$&$<1$\\
ex2&5&&&4&6&6&6&6&$<1$&$<1$&$<1$\\
ex3&11&&&11&21.93&22.32&22.32&32&123.86&54.89&8.14\\
ex4&12&&&10&29.57&29.66&29.66&37&238.94&33.78&1.28\\ \hline
&&&&&$\xicpid_{3,\ddagger}$&$\xicpsp_{3,\ddagger}$&$\xicpwsp_{3,\ddagger}$&&&& \\ \hline
ex3&11&&&11&-&-&22.33&32&-&-&2648.69\\
ex4&12&&&10&-&-&29.66&37&-&-&28.69\\
\multicolumn{12}{@{}p{5.5in}}{\footnotesize     }\\
\multicolumn{12}{@{}p{5.5in}}{\footnotesize $n=$ size of $A$, $p=$  number of maximal cliques of $G_A$,  $c=$ edge clique-cover number of $G_A$,  $r=\rank (A)$,  $r_{\mathrm{cp}}= \cprank(A)$ }\\
 \multicolumn{12}{@{}p{5.5in}}{\footnotesize - $:$ computations failed due to memory constraints}\\
    \end{tabular}
  \end{center}
\end{table}

In Table \ref{tab:table1},  the values of the bounds at level $t=3$ are close to those at level $t=2$ for matrices ex3 and ex4. 
However,  the tests for the flatness condition (\ref{eqflatk}) fail, so that one cannot  claim that the bounds are equal to $\taucp$ at this stage. 

\medskip
We also tested whether the flatness conditions (\ref{eqflat}) and (\ref{eqflatk}) hold for matrices ex1 and ex2 at level $t=2$, and whether one can extract atoms and construct a cp-factorization. 

The results are summarized in Table \ref{tab:table2}, where we indicate the number of atoms (corresponding to a cp-factorization with that many factors) when the extraction procedure is successful. We indicate that the extraction procedure fails by reporting ``$\#$ atoms=0''. 
 As mentioned in \cite{HL05}, one may indeed try and apply the extraction procedure even if flatness does not hold.
 
For the  dense bounds of level $t=2$,  flatness does not hold for the matrices ex1 and ex2. However, while  one does not succeed to extract atoms for matrix ex1, the extraction is successful for matrix ex2 and returns 6 atoms. 
Interestingly, flatness holds for the ideal-sparse bounds and the atom extraction is successful.  However, the number of extracted atoms is 10 for matrix ex1, thus twice the cp-rank. To verify that the extracted atoms are (approximatively)  correct,  we use them to construct a cp-matrix $A_{\rm{rec}}$, which we then compare to the original matrix $A$.  In all cases we obtain  $\|A_{\rm{rec}} -A \|_1 \leq 10^{-8}$, which shows that a correct factorization has been constructed.

Note that for the ideal-sparse parameter, since one splits the problem over the maximal cliques and has a distinct linear functional $L_k$ for each clique $V_k$, it may be more difficult to satisfy the flatness condition (\ref{eqflatk}) (since each $L_k$ must satisfy it), as happens for matrices ex3 and ex4. 
\begin{table}[ht]
 \caption{Testing flatness and atom extraction}
  \begin{center}
    \label{tab:table2}
    \begin{tabular}{c|c|c||c|c||c|c} 
   $A$ &\multicolumn{2}{|c||}{ $\xicpid_{2,\ddagger}$} & \multicolumn{2}{|c||}{ $\xicpsp_{2,\ddagger}$} & \multicolumn{2}{|c}{ $\xicpwsp_{2,\ddagger}$} \\ \hline
    & flatness (\ref{eqflat}) & \# atoms & flatness  (\ref{eqflatk}) & \# atoms & flatness (\ref{eqflatk}) & \# atoms \\
ex1 &	false&	0&	true& 10&	false&	0\\
ex2 &	false&	6&	true&  6&   true&	6 \\
\hline
  &\multicolumn{2}{|c||}{ $\xicpid_{3,\ddagger}$} & \multicolumn{2}{|c||}{ $\xicpsp_{3,\ddagger}$} & \multicolumn{2}{|c}{ $\xicpwsp_{3,\ddagger}$} \\ \hline
ex1 &	false&	10&	true& 10 &	false&	0\\
ex2 &	true&	6&	true&  6&   true&	6 \\
    \end{tabular}
  \end{center}
\end{table}

\subsubsection{Doubly nonnegative matrices that are not completely positive}

In this section we consider the following three matrices that are known to be doubly nonnegative but not completely positive (taken from  \cite{93SZ,Nie14,98B}):
\begin{equation*}
  \text{ex5} =
  \left( {\begin{array}{ccccc}
 1& 1& 0& 0& 1\\
 1& 2& 1& 0& 0\\
 0& 1& 2& 1& 0\\
 0& 0& 1& 2& 1\\
 1& 0& 0& 1& 3
  \end{array} } \right),
 ~~
  \text{ex6} =
  \left( {\begin{array}{ccccc}
  1& 1& 0& 0& 1\\
  1& 2& 1& 0& 0\\
  0& 1& 2& 1& 0\\
  0& 0& 1& 2& 1\\
  1& 0& 0& 1& 6
   \end{array} } \right),
  ~~
  \text{ex7} =  
   \left( {\begin{array}{cccccc}
   7&  1&  2&  2&  1&  1\\
   1& 12&  1&  3&  3&  5\\
   2&  1&  2&  3&  0&  0\\
   2&  3&  3&  5&  0&  0\\
   1&  3&  0&  0&  2&  4\\
   1&  5&  0&  0&  4& 10\\
    \end{array} } \right).
\end{equation*}

The objective is to see whether the hierarchies are able to detect that the matrix is not cp. This can be achieved in two ways: when the solver  returns an infeasibility certificate,  or when it returns a bound that exceeds a known upper bound on the cp-rank. 
 We test this for the bounds at level $t=1$ and $t=2$. At level $t=2$ we try different variants by adding the constraints (\ref{cpdagger1}), (\ref{cpdagger2}), (\ref{cpdagger3}), and (\ref{eqcpxx}) and their sparse analogs. The results are presented in Tables \ref{tab:table3} and \ref{tab:table4}. 
 
 There we indicate one of three possible outcomes.
  The first outcome is indicated with a question mark ``?", which indicates that the solver could not reach a decision within the default MOSEK solver parameters.  The second possible outcome is when the solver returns an infeasibility certificate (indicated with *), or when it returns a value that exceeds a known  upper bound for the cp-rank (in which case the bound is marked with *). The last column in both tables, labeled ``$r_{\text{\rm cp}}\le $", provides such an upper bound on the cp-rank of a cp-matrix with the given support graph.
The third possible outcome is when the solver returns a value that does not violate the upper bound, in which case no conclusion can be reached.
All computations took less than a second and hence times are not shown.

\begin{table}[h!]
  \begin{center}
    \caption{Detecting non-cp matrices for $t=1$.}
    \label{tab:table3}
    \begin{tabular}{l|l|l|l|c|c|c} 
  $A$& $n$ &  $r$ &$\xicpid_1$ &$\xicpsp_1$ &$\xicpwsp_1$  & $r_{\rm{cp}} \leq$\\
      \hline
ex5&5&4&2.47&*&*&5\\
ex6&5&5&2.59&*&*&5\\
ex7&6&6&2.4&3.02&3.02&17\\
    \multicolumn{3}{@{}p{1.5in}}{\footnotesize * $=$ infeasibility certificate}\\
   \end{tabular}
  \end{center}
\end{table}

\begin{table}[h!]
\caption{Detecting non cp-matrices for $t=2,3$.}
  \begin{center}
    \label{tab:table4}
    \begin{tabular}{l|l|l |c c c |c c c| c c c|c} 
     $A$ & $n$ & $r$ &$\xicpid_2$ &$\xicpsp_2$ &$\xicpwsp_2$ & $\xicpid_{2,\dag}$ &
     $\xicpsp_{2,\dag}$& $\xicpwsp_{2,\dag}$  & $\xicpid_{2,\ddagger}$
     &$\xicpsp_{2,\ddagger}$ &$\xicpwsp_{2,\ddagger}$ & $r_{\rm{cp}} \leq$\\
      \hline
ex5&5&4&?&*&?&?&*&*&*&*&*&5\\
ex6&5&5&13.56*&?&*&13.56*&*&*&16.11*&*&*&5\\
ex7&6&6&?&34.88*&34.01*&12.94&*&*&13.89&*&*&17\\
\hline
      &  &  &$\xicpid_3$ &$\xicpsp_3$ &$\xicpwsp_3$ & $\xicpid_{3,\dag}$ &
     $\xicpsp_{3,\dag}$& $\xicpwsp_{3,\dag}$  & $\xicpid_{3,\ddagger}$
     &$\xicpsp_{3,\ddagger}$ &$\xicpwsp_{3,\ddagger}$ & \\
      \hline
ex5&5&4&?&	*&	?&	*&	*&	*&	*&	*&	*&5\\
ex6&5&5&?&	*&	*&	?&	*&	*&	194.19*&	*&	*&5\\
ex7&6&6&?&	?&	?&	?&	*&	*&	?&	*&	*&17\\
 \multicolumn{12}{@{}p{5.5in}}{\footnotesize * $=$ infeasibility certificate, ~ ? $=$ unknown result status}
    \end{tabular}
  \end{center}
\end{table}

We make three observations about Tables \ref{tab:table3}-\ref{tab:table4}. The first is that the ideal-sparse hierarchies show infeasibility at level $t=1$ already for examples ex5 and ex6 while the dense hierarchy shows the same only at level $t=2$ with all additional constraints imposed. Secondly, the ideal-sparse hierarchy correctly identifies ex7 as not cp at level $t=2$ while the dense hierarchy does not succeed even at level $t=3$. The third observation is that adding additional constraints helps prevent the solver from returning an ``unknown result status" but this seems to be less needed in the case of the ideal-sparse hierarchies.
It should be noted that increasing the level of the hierarchy creates more opportunity for numerical errors in the computations, as seen in Table~\ref{tab:table4}.

\section{Application to the nonnegative rank}\label{sec:app-nn}

In this section we indicate how the treatment in the previous section for the cp-rank extends naturally to the asymmetric setting of the nonnegative rank.

\subsection{Ideal-sparsity bounds for the nonnegative rank}

Given a nonnegative matrix $M\in \R^{m\times n} $, its {\em nonnegative rank}, denoted $\nnrank(M)$, is  the smallest integer $r$ for which there exist nonnegative vectors $a_\ell \in\R^m_+$ and $b_\ell\in \R^n_+$ such that 
\begin{equation}\label{eqnnM}
M=\sum_{\ell=1}^r a_\ell b_\ell^T.
\end{equation}
Computing the nonnegative rank is an NP-hard problem \cite{Vav09}.
Fawzi and Parrilo \cite{FP16} introduced  the following natural ``convexification" of the nonnegative rank:
$$
\tau_+(M)=\inf\Big\{\lambda: {1\over \lambda} M \in \text{conv}\{ xy^T: x\in \R^m_+, y\in \R^n_+, M \ge xy^T\}\Big\},$$
which can be seen as an asymmetric analog of $\taucp$. 
We consider  the analogs of the parameters $\xicpid_t$ and $\xicpsp_t$, which now involve  linear functionals acting on polynomials in $m+n$ variables. As in the introduction,  set $V=[m+n]=U\cup W$, where $U=[m]=\{1,\ldots,m\}$ (corresponding to the row indices of $M$) and $W=\{m+1,\ldots, m+n\}$ (corresponding to the column indices of $M$, up to a shift by $m$).
Set 
$$E^M=\{\{i,j\}\in U\times W: M_{i,j-m}\ne 0\},$$ 
so that  the bipartite graph  $G^M=(V=U\cup W, E^M)$  corresponds to the support graph of $M$. 
We also set $\olE^M=(U\times W)\setminus E^M$ and
$M_{\max}=\max_{i\in U,j\in W}M_{i,j-m}$.
As is  well-known (see, e.g., \cite{GdLL2019a}),   the vectors in (\ref{eqnnM}) may be assumed to satisfy 
$\|a_\ell\|_\infty, \|b_\ell\|_\infty \le \sqrt{M_{\max}}$ (after rescaling). This motivates the definition of the semialgebraic set $K^M$ from (\ref{eqsetKM}) and,
for any integer $t \ge 1$,  of the  parameter: 
  \begin{align}\xinnid_t(M) = \min\{L(1): & \ L\in \R[x_1,\ldots,x_{m+n}]^*_{2t},\\
&   L(x_ix_j)=M_{i,j-m}\ (i\in U, j\in W), \label{eqnnA} \\
& L([x]_t[x]_t^T)\succeq 0, \label{eqnn0}\\
&L((\sqrt{M_{\max}}x_i-x_i^2)[x]_{t-1}[x]_{t-1}^T)\succeq 0 \ \text{ for } i\in V, \label{eqnnxi}\\
&L((M_{i,j-m}-x_ix_j)[x]_{t-1}[x]_{t-1}^T)\succeq 0 \ \text{ for } \{i,j\}\in E^M,\label{eqnnxixj}\\
& L(x_ix_j[x]_{2t-2})=0 \ \text{ for } \{i,j\}\in \olE^M\}. \label{eqnnid}
\end{align}
If we omit the (ideal) constraint (\ref{eqnnid}) and require the constraint (\ref{eqnnxixj}) to hold also for pairs $\{i,j\}\in \olE^M$, then we obtain the (weaker)  parameter $\xinn_t(M)$,  introduced in \cite{GdLL2019a} as a lower bound on $\tau_+(M)$ (and thus on $\rank_+(M)$).

In addition, we can   define  ideal-sparse bounds, by further exploiting the sparsity pattern of $M$. 
As the support graph $G^M$  is now a bipartite graph it is convenient to use the following notion of biclique. A {\em biclique} in $G^M$ corresponds to a complete bipartite subgraph and it is thus given by a pair $(A,B)$ with $A\subseteq U$ and $B\subseteq W$ such that  $\{i,j\}\in E^M$ for all $(i,j)\in A\times B$; it is maximal if $A\cup B$ is maximal.
Let $V_1=A_1\cup B_1,\ldots, V_p=A_p\cup B_p$ be the vertex sets of  the maximal bicliques in $G^M$
and, for any integer $t\ge 1$,  define the parameter
  \begin{align}\xinnsp_t(M)= \min\Big\{& \sum_{k=1}^pL_k(1):  \ L_k\in \R[x(V_k)]^*_{2t}\ (k\in [p]),\\
&   \sum_{k\in [p]: \{ i,j\}\subseteq  V_k}  L_k(x_ix_j)=M_{i,j-m}\ (i\in U, j\in W), \label{eqnnAsp} \\
& L_k([x(V_k)]_t[x(V_k)]_t^T)\succeq 0 \ (k\in [p]), \label{eqnn0sp}\\
&L_k((\sqrt{M_{\max}}x_i-x_i^2)[x(V_k)]_{t-1}[x(V_k)]_{t-1}^T)\succeq 0 \ (i\in V_k,\ k\in [p]), \label{eqnnxisp}\\
&L_k((M_{i,j-m}-x_ix_j)[x(V_k)]_{t-1}[x(V_k)]_{t-1}^T)\succeq 0 \ ( i\in U, j\in W, \{i,j\}\subseteq V_k,\ k\in [p])\Big\}.\label{eqnnxixjsp}
\end{align}
Summarizing, we have the following inequalities among the above parameters
$$
\xinn_{t-1}(M)\le \xinnid_t(M)\le \xinnsp_t(M)\le\tau_+(M)\le  \nnrank (M) \ \text{ for any } t\ge 2,
$$
with asymptotic convergence of all bounds  to $\tau_+(M)$; this was shown in  \cite{GdLL2019a} for the bounds $\xinn_t(M)$ (and this also follows as an application of Theorem \ref{theoconvGMP}).

As in the case of the cp-rank, there are more constraints that may be added to the above programs to strengthen the bounds.
In \cite{GdLL2019a} the authors propose to exploit the nonnegativity of the variables and add the constraints
\begin{align}
L([x]_{2t})\ge 0,\label{nndag1}\\
L((\sqrt{M_{\max}}x_i-x_i^2) [x]_{2t-2})\ge 0 \text{ for } i\in V,\label{nndagxi}\\
L((M_{i,j-m}-x_ix_j)[x]_{2t-2})\ge 0\text{ for } (i,j)\in U\times W.\label{nndagxixj}
\end{align} 
Let $\xinn_{t,\dag}(M)$ denote the parameter obtained by adding the constraint (\ref{nndagxixj})
to $\xinn_t(M)$.
Similarly, one may add (\ref{nndagxixj})  to the parameter $\xinnid_t(M)$ (requiring (\ref{nndagxixj}) only for pairs in $E^M$) and its sparse analog to $\xinnsp_t(M)$, leading, respectively, to the parameters
$\xinnid_{t,\dag}(M)$ and $\xinnsp_{t,\dag}(M)$.
So, $\xinn_{t,\dag}(M)\le \xinnid_{t,\dag}(M)\le \xinnsp_{t,\dag}(M)$.
Finally, we also introduce the parameters, where we use the symbol $\ddagger$ instead of $\dag$ when adding all the constraints 
(\ref{nndag1}), (\ref{nndagxi}), (\ref{nndagxixj}).

\subsection{Links to combinatorial lower bounds  on the nonnegative rank}
We now recall some other known lower bounds on the nonnegative rank and indicate their relations to the parameters considered here.

Fawzi and Parrilo \cite{FP16} introduced a semidefinite bound $\taunnsos(M)$ and show it satisfies
$\taunnsos(M)\le \taunn(M)$. In \cite{GdLL2019a} it is shown that the parameters $\xinn_{2,\dag}(M)$ strengthen this bound\footnote{This follows from the proof of  \cite[Proposition 15]{GdLL2019a}, since it only uses the relation $L((M_{ij}-x_ix_j)x_ix_j)\ge 0$ for any $(i,j)\in U\times W$ in addition to the constraints defining the basic parameter $\xinnid_2(M)$.}:
$$\taunnsos(M)\le \xinn_{2,\dag}(M)\le \taunn(M).$$

There is a well-known combinatorial lower bound on the nonnegative rank, which can be seen as an asymmetric analog of the lower bound on  the cp-rank of $A$ given by the edge clique-cover number $\text{\rm c}(G_A)$.
Recall $G^M=(U\cup W, E^M)$ is the bipartite graph defined as the support graph of $M\in \R^{m\times n}_+$. Define the {\em edge biclique-cover number} of  $G^M$, denoted $\bc(G^M)$,  as the smallest number of bicliques whose union covers every edge in $E^M$. Then, we have
$$\nnrank(M)\ge \bc(G^M).$$ 
As a biclique in $G^M$ corresponds to a pair $(A,B)\subseteq U\times W$ for which the rectangle $A\times B$ is fully contained in the support of $M$, the parameter $\bc(G^M)$ is also known as the {\em rectangle covering number} of $M$ (see, e.g., \cite{FP16,GG2012}).
Define its fractional analog $\bc_{\text{\rm frac}}(G^M)$ as
\begin{equation}\label{eqbcfrac}
\bc_{\text{\rm frac}}(G^M)=\min\Big\{\sum_{k=1}^p x_k: x\in \R^k_+,\ \sum_{k:\{i,j\}\subseteq V_k} x_k\ge 1\ \text{ for } \{i,j\}\in E^M\Big\} \le \bc(G^M).
\end{equation}

Yet another well-known combinatorial interpretation of bicliques is as follows.
Define the rectangular graph $\RG(M)$, with vertex set $E^M$ and where two distinct pairs $\{i,j\},\{k,\ell\}\in E^M$ form an edge of $\RG(M)$ if $M_{i\ell}M_{kj}=0$. In other words, $\{i,j\},\{k,\ell\}\in E^M$ do not form an edge in $\RG(M)$ precisely if $(\{i,k\}, \{j,\ell\})$  corresponds to a biclique in $G^M$. Then, the parameter $\bc(G^M)$ coincides with the coloring number of $\RG(M)$ and $\bc_{\text{\rm frac}}(G^M)$ coincides with $\chi_f(\RG(M))$, the fractional coloring number of $\RG(M)$. So, $$ \nnrank(M)\ge \bc(G^M)=\chi(\RG(M)).$$
The following relationships are shown in \cite{FP16}:
$$\taunn(M)\ge \chi_f(\RG(M))=\bc_{\text{\rm frac}}(G^M),\ \ 
\taunnsos(M)\ge \overline \vartheta(\RG(M)),$$
where  $\overline \vartheta(\RG(M))$ is the theta number of the complement of $\RG(M)$.
As we now observe, the ideal-sparse parameter $\xinnsp_1(M)$  is at least as good as $\bc_{\text{\rm frac}}(G^M)$, which is the analog of Lemma \ref{lemcplbecc}.

\begin{lemma}\label{lemnnlb}
For $M\in \R^{m\times n}_+$  we have $ \xinnsp_1(M)\ge\bc_{\text{\rm frac}}(G^M)$.
\end{lemma}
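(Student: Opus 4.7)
The plan is to mimic the proof of Lemma \ref{lemcplbecc} essentially verbatim, replacing the edge clique-cover setup for $G_A$ by the edge biclique-cover setup for $G^M$. The key ingredients on the ideal-sparse side are the localizing constraint \eqref{eqnnxixjsp} at level $t=1$ (which gives scalar inequalities of the form $L_k(M_{i,j-m}-x_ix_j)\ge 0$) and the moment condition \eqref{eqnnAsp} that distributes the prescribed moments $M_{i,j-m}$ across the $p$ linear functionals.

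Concretely, I would start by fixing an optimal solution $(L_1,\ldots,L_p)$ for $\xinnsp_1(M)$ and choosing an edge $\{i,j\}\in E^M$ with $i\in U$, $j\in W$. For every $k\in[p]$ with $\{i,j\}\subseteq V_k$, the first-level localizing constraint \eqref{eqnnxixjsp} applied to the polynomial $M_{i,j-m}-x_ix_j$ (and to the constant $1$, since $t=1$) yields
\[
M_{i,j-m}\, L_k(1)\;\ge\; L_k(x_ix_j).
\]
Summing this over all maximal bicliques $V_k$ containing $\{i,j\}$, and using \eqref{eqnnAsp} on the right-hand side, gives
\[
M_{i,j-m}\sum_{k\,:\,\{i,j\}\subseteq V_k} L_k(1)\;\ge\;\sum_{k\,:\,\{i,j\}\subseteq V_k} L_k(x_ix_j)\;=\;M_{i,j-m}.
\]
Since $\{i,j\}\in E^M$ means $M_{i,j-m}>0$, dividing yields $\sum_{k:\{i,j\}\subseteq V_k}L_k(1)\ge 1$ for every $\{i,j\}\in E^M$.

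To finish, I would observe that the vector $x=(L_k(1))_{k=1}^{p}\in\R_+^p$ (nonnegativity follows from \eqref{eqnn0sp} applied to the $1\times 1$ principal block) is therefore feasible for the linear program \eqref{eqbcfrac} defining $\bc_{\text{\rm frac}}(G^M)$, so that
\[
\xinnsp_1(M)\;=\;\sum_{k=1}^{p} L_k(1)\;\ge\;\bc_{\text{\rm frac}}(G^M),
\]
as desired. There is no real obstacle here: the only subtle point is that the maximal cliques of the support graph $G_A$ in Lemma \ref{lemcplbecc} are replaced by the maximal bicliques of the bipartite graph $G^M$, but since the ideal-sparse formulation $\xinnsp_1$ is defined with exactly those bicliques as the index sets $V_k$, the same one-line argument carries through.
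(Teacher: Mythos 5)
Your proof is correct and follows essentially the same route as the paper's: extract the scalar inequality $M_{i,j-m}L_k(1)\ge L_k(x_ix_j)$ from the localizing constraint (\ref{eqnnxixjsp}), sum over the maximal bicliques containing the edge, invoke the moment constraint (\ref{eqnnAsp}) and $M_{i,j-m}>0$, and conclude that $(L_k(1))_{k=1}^p$ is feasible for the linear program (\ref{eqbcfrac}). The paper's proof is just a more condensed version of the same argument, exactly as you predicted by mirroring Lemma \ref{lemcplbecc}.
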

\begin{proof}
Let $(L_1,\ldots, L_p)$ be an optimal solution for $\xinnsp_1(M)$. Then, 
$L_k(M_{i,j-m}-x_ix_j)\ge 0$ for each $k\in [p]$ and  $\{i,j\}\in E^M$  such that $\{i,j\}\subseteq V_k$. 
As $\sum_{k: \{i,j\}\subseteq V_k}L_k(x_ix_j)=M_{i,j-m}$,  this implies
$\sum_{k: \{i,j\}\subseteq V_k} L_k(1) \ge 1$ for each $\{i,j\}\in E^M$. Hence, the vector $x=(L_k(1))_{k=1}^p$ provides a feasible solution to program (\ref{eqbcfrac}), which implies $\sum_{k=1}^pL_k(1)\ge \bc_{\text{\rm frac}}(G^M)$.
\end{proof}

As for the cp-rank, we now give a class of matrices showing a large separation between the ideal-sparse and dense bounds of level $t=1$.

\begin{example} \label{ex:xi_1_sep}
Consider the identity matrix $M=I_n\in \mathcal S^n$. Clearly, we have $\cprank(I_n)=\rank(I_n)=n$. As the support graph $G^M$ is the disjoint union of $n$ edges, its fractional edge biclique-cover number is equal to $n$ and thus, in view of Lemma \ref{lemnnlb}, we have $\xinnsp_1(I_n)=n=\nnrank(I_n)$.
We now show that for the dense bound, we have $\xinnid_1(I_n)< 8$ for any $n\ge 4$.
For this recall that $\xinnid_1(I_n)$ is given by 
$$\xinnid_1(I_n)=\min\{L(1): L\in \R[x]_2^*,\ L(x_i)\ge L(x_i^2) \ (i\in [2n]), \ L(x_ix_{n+j})=\delta_{i,j} \ (i,j\in [n]),\ L([x]_1[x]_1^T)\succeq 0\}.$$
Consider the linear functional $L\in \R[x]_2^*$ defined by
$L(1) = 8{n-2\over n}$, $L(x_i)=L(x_i^2)= 2{n-2\over n}$ for $i\in [2n]$, $L(x_ix_j)=L(x_{n+i}x_{n+j})= {n-4\over n}$ for $i\ne j\in [n]$, and $L(x_ix_{n+j})=\delta_{i,j}$ for $i,j\in [n]$. Then one can check that
$$L([x]_1[x]_1^T) = \left(\begin{matrix}
8{n-2\over n} & 2{n-2\over n}e^T & 2{n-2\over n}e^T \cr
2{n-2\over n}e & I_n+ {n-4\over n}J_n & I_n\cr
2{n-2\over n}e & I_n & I_n+ {n-4\over n}J_n
\end{matrix}\right)\succeq 0.
$$
Hence, $L$ is feasible for the program defining $\xinnid_1(I_n)$, which shows $\xinnid_1(I_n)\le L(1)=  8{n-2\over n}<8.$
\end{example}

\subsection{Numerical results for the nonnegative rank}\label{sec:nnnumerical}
In this section we test the ideal-sparse and dense hierarchies on two classes of nonnegative matrices. The first class consists of size $4 \times 4$ matrices that depend continuously on a single variable. The second class we consider are the Euclidean distance matrices (EDMs).

\subsubsection{Matrices related to the nested rectangles problem}
The nonnegative matrices we will consider  have an interesting link between their nonnegative rank and the geometric nested rectangles problem (see \cite{BFPS15}).  Bounds for their nonnegative rank were investigated by Fawzi and Parrilo \cite{FP16} and Gribling et al.  \cite{GdLL2019a}.
Consider the  matrices
$$
S(a,b) := 
  \left(\begin{array}{@{}cccc@{}}
    1-a & 1+a & 1-b & 1+b  \\
    1+a & 1-a & 1-b & 1+b \\
    1+a & 1-a & 1+b & 1-b  \\
    1-a & 1+a & 1+b & 1-b 
  \end{array}\right) \quad \text{ for } a,b\in [0,1].
$$
If $a,b<1$, then $S(a,b)$ is fully dense and no improvement can be expected from our new bounds. 
Thus, we consider the case $b=1$ and  $a\in [0,1]$.  We have computed the bounds  $\xinnid_{t,\ddagger}(M)$  and $\xinnsp_{t,\ddagger}(M)$ at level $t =1,2,3$ for $M = S(a,1)$ with $a$ ranging from $0$ to $1$ in increments of $0.01$. The results are displayed in Figure \ref{longplot} below. 
We can make the following two observations about  Figure \ref{longplot}. First,  the ideal-sparse hierarchy is much stronger at level $t=1$,  but at level $t=2$ the dense and ideal-sparse hierarchies give comparable bounds. Second, for $a=1$,  all bounds, except the dense bound of level 1, are equal to $4=\nnrank(S(1,1))$ (as is  expected for the ideal-sparse hierarchy in view of Lemma~\ref{lemnnlb}).

\begin{figure}[ht]
	\centering
	\textbf{Bounds $\xinnid_{t,\ddag}(M)$ and $\xinnsp_{t,\ddag}(M)$  for $M=S(a,1)$ and $t=1,2,3$ versus $a\in[0,1]$ }\par\medskip
	\includegraphics[scale=0.60]{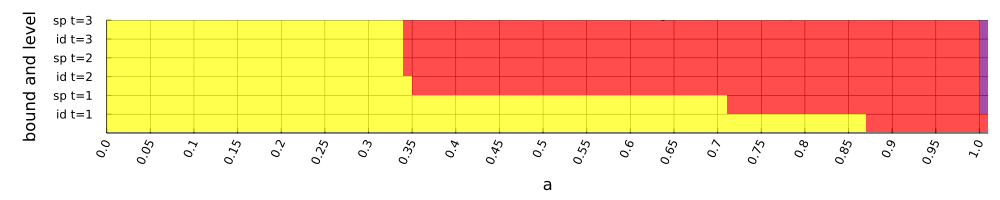}
	\caption{This figure shows $\xinnid_{t,\dag}(S(a,1))$ and $\xinnsp_{t,\dag}(S(a,1))$ computed at levels $t =1,2,3$  with $a$ ranging from $0$ to $1$ in increments of $0.01$. The colour indicates a lower bound on the obtained numerical value: yellow, red and purple show the bound is at least 2, 3, and 4, respectively. So a red square at $a=0.35$ and ``sp t=2" means  $\xinnsp_{2,\dag}(M) \geq 3$. }
	\label{longplot}
\end{figure}

\subsubsection{Euclidean distance matrices}\label{secEDM}
The second class of examples we consider 
are the Euclidean distance matrices $M_n=((i-i)^2)_{i,j=1}^n\in\R^{n\times n}_+$, known to have a large separation between their rank and their nonnegative rank.
Indeed, $\rank(M_n)=3$ \cite{BL2009}, and their bipartite support graph $G^{M_n}$ is  $K_{n,n}$ with a deleted perfect matching (known as a {\em crown graph}), whose edge biclique-cover number satisfies  $ \bc(G^{M_n})=\Theta(\log n)$
 \cite{CGP}. So we have $\rank({M_n})=3$ and  $\nnrank (M)\ge \bc(G^{M_n})=\Theta(\log n)$. In addition, it is known that $\nnrank({M_n})\le 2+ \lceil \frac{n}{2} \rceil $, see \cite[Theorem~9]{GG2012}.
The numerical  results are shown in Table \ref{tab:table5}. 
In these examples, the ideal-sparse bound of level $t=2$ is more difficult to compute, since the support graph $G^M$ has  $2^{n-1}$ maximal bicliques, each with $n$ vertices. For this reason we could compute $\xinnsp_{2,\dag}$ only until $n=7$ before running out of memory.  
So this example illustrates the limitations of the ideal-sparsity approach, when the number of maximal cliques is too large. Note  that this difficulty -- large number of maximal bicliques --  remains even if we would replace the support graph $G^{M_n}$ by a supergraph $\tilde G$, obtained by adding to $M^{G_n}$ (say) $s$ edges from the missing perfect matching. Indeed, such $\tilde G$ still has $2^{n-s-1}$ maximal bicliques, each with $n+s$ vertices.

\begin{table}[h!]
\caption{Bounds for the matrices $M=((i-j)^2)_{i,j=1}^n$.}
  \begin{center}
    \label{tab:table5}
\begin{tabular}{ c | c || c | c || c | c || c } 
$n$ & bc & $\xinnid_{1,\dag}$ & $\xinnid_{2,\dag}$ & $\xinnsp_{1,\dag}$ & $\xinnsp_{2,\dag}$ & 
$2+ \lceil \frac{n}{2} \rceil $ \\
      \hline
4& 4 &2&3.46&3&3.63&4 \\
5& 4&2&3.73&3.35&4.19&5 \\
6& 4 &2&3.96&3.41&4.53&5 \\
7& 5 &2&4.17&3.55&4.85&6 \\
8& 5&2&4.35&3.59&-&6 \\
9& 5&2&4.51&3.66&-&7 \\
   \end{tabular}
  \end{center}
\end{table}

\section{{Concluding remarks}}\label{sec:final}

In this paper we have introduced a new sparsity approach for GMP, which arises when in the formulation of GMP one has explicit ideal-type constraints that require the support of the measure to be contained in the variety of an ideal generated by monomials $x_ix_j$ corresponding to (the non-edges of) a graph $G$. We compared it to the more classic correlative sparsity structure that requires a chordal structure on the graph $G$, while our new ideal-sparse hierarchy does not need it. We explored its application to  the problem of bounding the nonnnegative rank and the cp-rank of a matrix and illustrate the new approach on some classes of examples.
There are several natural extensions and further research directions that are left open by this work.
{We now sketch some of them.}

\paragraph*{How to deal with many cliques.} 
In the new ideal-sparse approach, instead of a single measure on the full space $\R^n$, one has several measures on smaller spaces indexed by the maximal cliques of the graph $G$. At any given level $t\ge 1$, the corresponding ideal-sparse bounds are at least as good as their dense analogs and, depending on the number of maximal cliques, their computation can be much faster.  The computation of the ideal-sparse parameters indeed involves several (based on the maximal cliques)  semidefinite matrices of smaller sizes. The first research direction is to investigate the trade-off between having  many cliques (in the ideal-sparse setting) and large matrix constraints (in the dense setting). As seen in Section \ref{secEDM} the sparse hierarchy behaves particularly bad on examples where the underlying graph has exponentially many cliques.
We suggest a possible solution in Remark \ref{remsparsechordal}, where we consider merging some of the cliques by considering a  (possibly chordal) extension of the support graph $G$. Clique merging has been explored before in the context of power flow networks, see \cite{20SAALT} and \cite{20GCG}. These methods exploit correlative sparsity and thus require the underlying support graph to be chordal. Finding the minimal chordal extension of a graph is NP-complete \cite{arnborg1987complexity}, 
but heuristics exist for certain cases (see, e.g., \cite{bodlaender2010treewidth}). 
Supposing one has chosen a method for finding chordal extensions, it is still unclear which among the possible chordal extensions will result in better SDPs. One can try to merge small cliques based on how much it would reduce the estimated computational burden. These estimates can be based, for example, on the number of constraints, see \cite{6510541}, or on the cost of an interior-point method iteration, see \cite{Sliwak2021ASO}.
As it stands, we know of no systematic way to find a ``computationally optimal" trade-off between the dense and ideal-sparse hierarchies.

\paragraph*{Application to other matrix factorization ranks.}
We have explored the application to  nonnegative  and  completely positive matrix factorization ranks. 
We have not considered their non-commutative analogs for the positive semidefinite (psd) rank and the completely positive semidefinite (cpsd) rank, where, respectively,  given  $M\in \R^{m\times n}_+$ one wants psd matrices $X_i,Y_j\in \mathcal S^r$ such that 
$M=(\langle X_i,Y_j\rangle)_{i\in [m], j\in [n]}$, and given  $A\in\mathcal S^n$ one wants psd matrices $X_i\in \mathcal S^r_+$ such that $A=(\langle X_i,X_j\rangle)_{i,j\in [n]}$, with $r$ smallest possible. One recovers the nonnegative rank and the cp-rank when restricting the factors $X_i,Y_j$ to be diagonal matrices. We refer the reader  to \cite{GdLL2019a}, where a common polynomial optimization framework is offered to treat all these four matrix factorization ranks. 
In the noncommutative setting of the psd- and cpsd-ranks, zero entries of $M$ (or $A$) also imply  ideal-type constraints of the form $X_iY_j=0$ (or $X_iX_j=0$). Thus the techniques in the present paper may extend to this general setting. We leave this extension to future work.  

\paragraph*{More general ideal-sparsity and applications.}
We have considered an ideal-sparsity structure, where the ideal in (\ref{eqIE}) is generated by quadratic monomials.
Beside their use for bounding matrix factorization ranks,   constraints of the form $x_ix_j = 0$ naturally arise in a number of other applications.  First we note that up to a change of variables, one can consider  more general constraints of the form  $(a^\top x + b)(c^\top x+d) = 0 $. This type of constraint is commonly referred to as a \emph{complementarity constraint}, where either the term $(a^\top x + b)$ or the term $(c^\top x+d)$ is required to be zero. We mention two areas where such complementary constraints naturally arise: analysis of neural networks and optimality conditions in optimization.

Complementarity constraints arise naturally when modeling neural networks with the rectified linear activation functions (ReLU). The semialgebraic representation of the graph of the ReLU function involves a constraint of the form $y(y-x) = 0$, which is exactly a complementarity constraint. The fact that the graph of the ReLU function admits a semialgebraic representation has been exploited computationally using the moment-sum-of-squares framework, for analyzing the Lipschitz constant of the neural network as well as stability and performance properties of dynamical systems controlled by the ReLU neural networks, see, e.g., ~\cite{chen2020semialgebraic,chen2021monotone,korda2022stability}. Ideal sparsity is therefore a natural candidate to render these methods more computationally efficient and would deserve further study.

Complementarity systems arise also in optimization within the Karush-Kuhn-Tucker (KKT) conditions. The complementarity slackness of the KKT condition reads $\lambda_i f_i(x) = 0$, where $\lambda_i$ is the Lagrange multiplier associated to the $i^{\mathrm{th}}$ constraint $f_i(x) \le 0$. If $f_i$ is affine, this is in the form of ideal constraints. The fact that the KKT conditions form a basic semialgebraic set when the optimization problem has polynomial data was exploited in~\cite{korda2017stability} to analyze dynamical systems controlled by optimization algorithms, albeit without exploiting the ideal-sparsity. More generally, the ideal-sparsity could be used to analyze the \emph{linear complementarity problems} (LCP) that have applications in, e.g., economics, engineering or game theory; see~\cite{cottle2009linear} for an extensive treatment of the subject.

Finally, instead of considering an ideal generated by quadratic monomials, 
one may consider an ideal generated by a set of monomials $x^S=\prod_{i\in S} x_i$ ($S\in \mathcal S$), where $\mathcal S$ is a given collection of subsets of $V=[n].$ The treatment extends naturally to this more general setting, where in the definition (\ref{eqsetK}) of the set $K$, we replace the constraints $x_ix_j=0$ ($\{i,j\}\in \olE$) by $\prod_{i\in S}x_i=0$ ($S\in \mathcal S$).
 Indeed,  let $V_1,\ldots,V_p$ denote  the maximal subsets of $V$ that do not contain any set $S\in \mathcal S$. Then, for the dense formulation (\ref{opt:dense}) of GMP, one can again show an equivalent sparse reformulation as in (\ref{opt:sparse}), which involves $p$ measures supported on the subspaces $\R^{|V_1|},\ldots,\R^{|V_p|}$ instead of a single measure on $\R^{|V|}$. We leave it for further research to explore  applications of this more general ideal-sparsity setting and possible further extensions to other types of varieties.

\subsection*{Acknowledgements}
We thank two anonymous referees for their comments and suggestions that helped improve the presentation.

\end{document}